\newcommand{\norm}[1]{\left\|#1\right\|}
\newtheorem{theorem}{Theorem}
\newtheorem{remark}[theorem]{Remark}
\newtheorem{lemma}[theorem]{Lemma}
\journal{Journal of Our Choice}
\begin{document}

\begin{frontmatter}

\title{Randomized Oversampling for Generalized Multiscale Finite Element Methods}
\author{\textbf{Victor M. Calo$^{1,2}$, Yalchin Efendiev}$^{1,3*}$}
\author{\textbf{Juan Galvis$^{4}$, Guanglian Li$^{3}$}}

\address{$^{1}$ Center for Numerical Porous Media (NumPor) \\
  King Abdullah University of Science and Technology (KAUST) \\
  Thuwal 23955-6900, Kingdom of Saudi Arabia.}

\address{$^{2}$ Applied Mathematics \& Computational Science
  and Earth Science \& Engineering\\
  King Abdullah University of Science and Technology (KAUST) \\
  Thuwal 23955-6900, Kingdom of Saudi Arabia.}

\address{$^{3}$ Department of Mathematics \& Institute for Scientific Computation (ISC) \\
  Texas A\&M University \\
  College Station, Texas, USA}

\address{$^{3}$ Departamento de Matem\'{a}ticas\\ Universidad Nacional de Colombia\\
Bogot\'a D.C., Colombia}

\cortext[cor1]{Email address: efendiev@math.tamu.edu}

\begin{abstract}
  In this paper, we study the development of efficient multiscale methods for flows in heterogeneous media.  Our approach uses the Generalized Multiscale Finite Element (GMsFEM) framework. The main idea of GMsFEM is to approximate the solution space locally using a few multiscale basis functions. This is typically achieved by selecting an appropriate  snapshot space and a local spectral decomposition, e.g., the use of oversampled regions in order to achieve  an efficient model reduction.  However, the successful construction of snapshot spaces may be costly if too many local problems need to be solved in order to obtain these spaces. In this paper, we show that  this efficiency can be achieved using a moderate quantity of local solutions (or snapshot vectors) with random boundary conditions on oversampled regions with zero forcing.  Motivated by the randomized algorithm presented in~\cite{ Martinsson06}, we consider a snapshot space which consists of harmonic extensions of random boundary conditions defined in a domain larger than the target region.  Furthermore, we perform an eigenvalue decomposition in this small space. We study the application of  randomized sampling for GMsFEM in conjunction with adaptivity, where local multiscale spaces are adaptively enriched. Convergence analysis is provided.  We present representative numerical results to validate the method proposed.  \end{abstract}

\begin{keyword} Generalized multiscale finite element method, oversampling, high-contrast, randomized approximation, snapshot spaces construction.  \end{keyword}

\end{frontmatter}

\section{Introduction}\label{sec:intro}

Model reduction is becoming increasingly important when dealing efficiently with problems characterized by multiple scales. Due to scale disparity, single-scale discretization techniques cannot provide useful results with acceptable computational cost in practice. In order to efficiently handle these multiscale problems, many model reduction techniques have been developed in the literature. These include approaches that are based on homogenization and numerical homogenization~\cite{dur91, weh02,Iliev_LW_10, SISC_2009}, the approaches that employ finite element basis functions to approximate the fine-scale features of the solution space~\cite{Babuska, ArPeWY07, bl11, hw97, ehw99}, and the approaches that employ global model reduction techniques~\cite{GhommemJCP2013, EGG_MultiscaleMOR, ohl12}.  In this paper, our focus is on approaches that are based on multiscale finite element methods which fall in the second category just mentioned.  We use a recently introduced framework known as the Generalized Multiscale Finite Element Method (GMsFEM) and discuss how one can reduce  the setup cost employing randomized Singular Value Decomposition (SVD) concepts~\cite{Martinsson06, hmt11}.

To construct multiscale basis functions, we employ the GMsFEM framework where the multiscale basis functions are constructed via a local spectral decomposition of a snapshot space. This snapshot space typically consists of spatial fields that represent the solution space up to some desired accuracy. For example, one choice for the snapshot space is to use harmonic functions that can represent any boundary value in each coarse region.  These snapshots are constructed by solving local problems for all possible boundary conditions. The latter allows us to incorporate the effects of many small-scale features into these snapshots and thus achieve low dimensional coarse models. However, the computation of these snapshots is expensive.  In this paper, we propose the use of random boundary conditions in constructing snapshot vectors. We show that by using only a few of these randomly generated snapshots, we can adequately approximate dominant modes of the solution space.  To avoid oscillations near the boundary, the oversampling technique is used. More precisely, we solve local problems in domains that are larger than the target coarse blocks. Typically, they are larger by several layers of fine-grid blocks around the target coarse block. Furthermore, we perform a local spectral decomposition using the restriction of the randomly generated snapshots to the target coarse-grid domain.

The use of random boundary conditions (to generate the snapshot spaces) is motivated by the randomized SVD methodology~\cite{Martinsson06, hmt11}. In general, randomized SVD algorithms allow computing dominant eigenvectors by considering a random linear combination of the columns (or rows) of a given matrix.  The random linear combinations typically have a component in the dominant modes and thus, by performing a spectral decomposition in the span of these random combinations, we can achieve an accurate approximation of dominant eigenvectors.

We take advantage of the idea of randomized linear combinations to considerably reduce the computational cost associated with the computation of snapshot vectors.  In particular, we propose solving local problems with random boundary conditions and perform the local spectral decomposition in the space of these snapshots. The cost reduction is due to the fact that, in previous approaches, the snapshot spaces were constructed by solving local problems for every possible boundary condition in each coarse region.  Using our new methodology, the number of snapshots to be generated is  only slightly larger than the number of desired eigenvectors. Our experience suggests that for GMsFEM modeling, in general it suffices to include four additional random boundary conditions to the number of eigenvectors sought.  For instance, in our numerical experiments, when three basis functions per coarse grid are needed, we compute only seven snapshot vectors (i.e., only seven random boundary conditions are generated). We discuss how the number of additional snapshots can depend on the eigenvalue structure for some special cases. This new methodology can provide substantial computational savings in the offline stage as we compute much fewer snapshots.  We show that one needs to use randomized boundary conditions on the oversampled region to avoid oscillations near the boundaries. Indeed, if random boundary conditions are imposed on the target coarse grid (and no oversampling is used), the computed solution has oscillations near the boundaries which can cause large errors. Moreover, oversampling snapshots have several additional advantages~\cite{eglp13oversampling} as they allow faster convergence for GMsFEM discretizations.

We compare the results obtained by using randomized snapshots to  these obtained when all snapshot vectors are used. In the latter, we employ all possible boundary conditions on the oversampled region to construct the snapshot vectors.  The local spectral decomposition is based on local eigenvalue problems, following previous studies~\cite{eglp13oversampling}.  Our numerical results show that one can achieve similar accuracy when using fewer random snapshots instead of using all possible snapshot vectors. Furthermore, we discuss approaches that can improve the results obtained by using randomized snapshots; however, at an additional computational cost.

We analyze the proposed method using~\cite[Lemma 18]{ Martinsson06} and the convergence of oversampling GMsFEM~\cite{ eglp13oversampling}. In a first step, we estimate the approximation error between the full snapshots and randomized snapshots in each coarse neighborhood in a certain norm.  This approximation error is used within GMsFEM analysis to show the convergence of the solution solved in the randomized snapshot space.  We also discuss adaptive strategies for randomized snapshots. In adaptive methods, additional multiscale basis functions are added based on error estimators. These estimators are proposed and investigated in~\cite{ Chung_adaptive14}. Later in the paper, we discuss how additional multiscale basis functions can be computed by considering only a few extra random snapshots. In particular, in simulations we only compute four additional snapshot vectors in order to compute each additional multiscale basis function to be added as a refinement in the coarse domains that contain most error.  The main objective of this paper is to show that the local snapshot spaces can be constructed inexpensively with an accuracy comparable to the state-of-the-art alternatives.

The paper is organized as follows. In Section~\ref{sec:prelim}, we give an introductory description of GMsFEM. In Section~\ref{sec:algorithm}, we present the randomized snapshot algorithm. Section~\ref{sec:numresults} is devoted to numerical results. In this section, we also discuss the use of adaptive strategies and how to compute additional multiscale basis functions. In Section~\ref{sec:analysis}, we present the mathematical analysis of the method and in Section~\ref{sec:conclusions} we draw conclusions.

\section{Preliminaries}
\label{sec:prelim}

We consider linear elliptic equations of the form
\begin{equation} \label{eq:original}
-\mbox{div} \big( \kappa(x) \, \nabla u \big)=f \, \, \text{in} \, D,
\end{equation}
where $u$ is prescribed on $\partial D$. We assume that the coefficient $\kappa(x)$ has multiple scales and high variations (e.g., see Fig.~\ref{fig:perms}). In this paper we focus on the two dimensional case but our methodology can be easily extended to problems in three dimensions, where the implied savings could be larger.

\begin{figure}[htb!]
  \centering
  \subfigure[$\kappa_1(x)$]{\label{fig:perm_cross}
    \includegraphics[width = 0.48\textwidth]{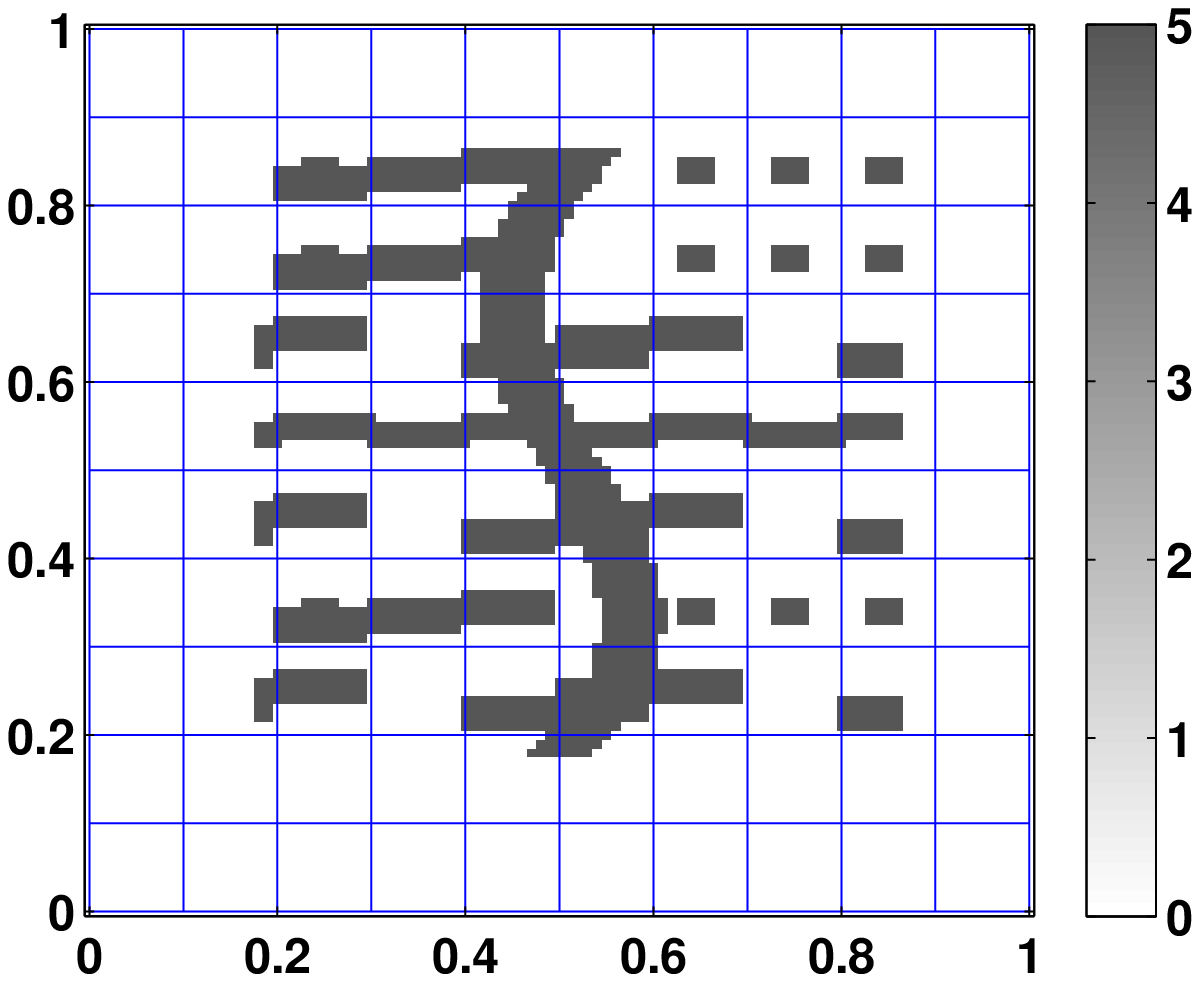}
   }
  \subfigure[$\kappa_2(x)$]{\label{fig:perm_hcc}
     \includegraphics[width = 0.48\textwidth]{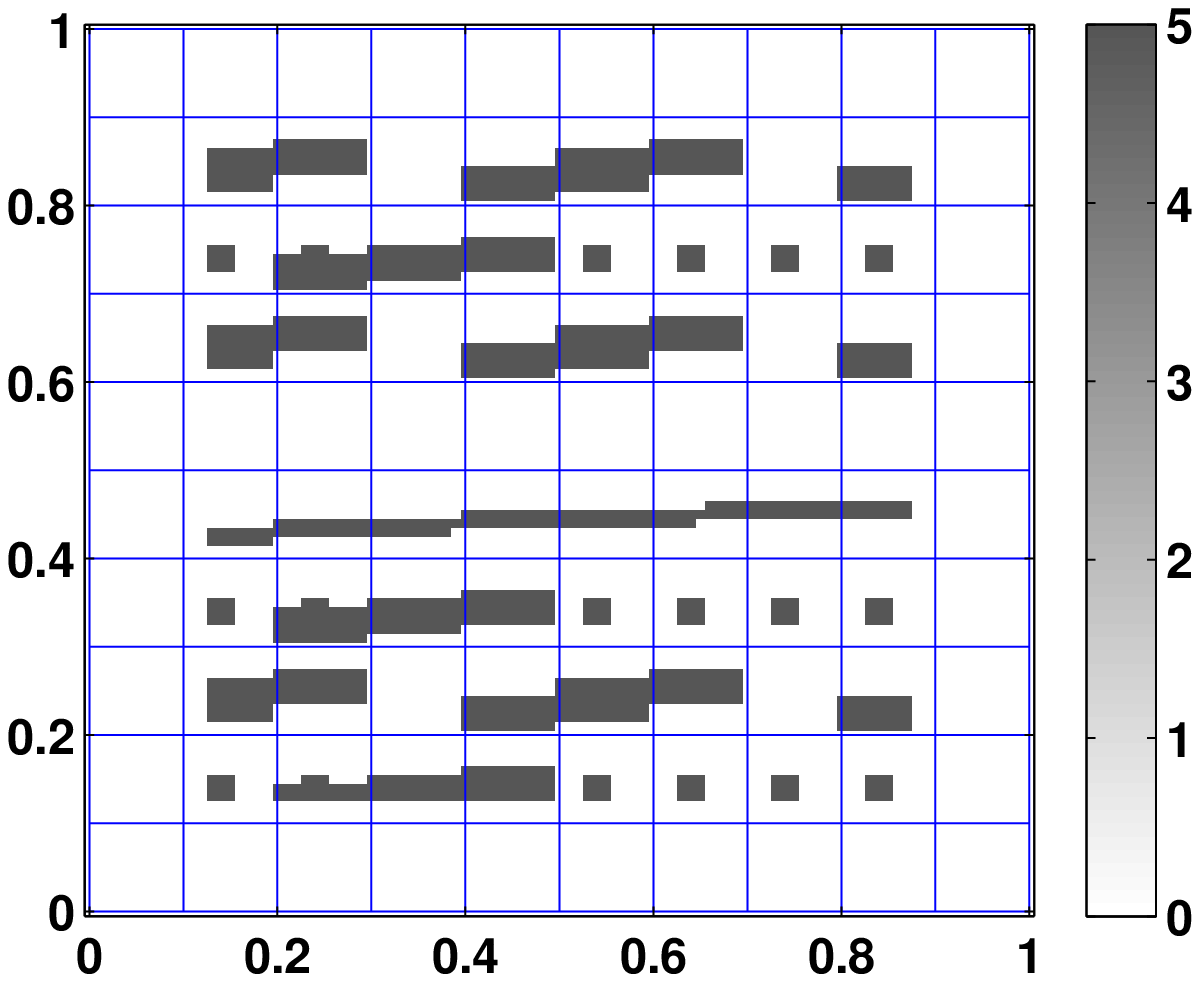}
  }
 \caption{Permeability fields in $\log_{10}$-scale.} 
\label{fig:perms}
\end{figure}

\subsection{Fine and coarse grids}

Let $\mathcal{T}^H$ be a conforming partition of the computational domain $D$ into finite elements denoted by $\{K_j\}$ (triangles, quadrilaterals, tetrahedrals, etc.), called coarse grid. Assume that each coarse subregion is partitioned into a connected union of fine-grid blocks. Assume the fine grids match across coarse elements boundaries and denote by $\mathcal{T}^h$ the obtained (fine-grid) triangulation of $D$.  We use $\{x_i\}_{i=1}^{N_c}$ (where $N_c$ the number of coarse nodes) to denote the vertices of the coarse mesh $\mathcal{T}^H,$ and define the neighborhood of the node $x_i$ by
\begin{equation} \label{neighborhood}
\omega_i=\bigcup\{ K_j\in\mathcal{T}^H; ~~~ x_i\in \overline{K}_j\}.
\end{equation}
See Fig.~\ref{schematic} for an illustration of neighborhoods and elements subordinated to the coarse discretization.  We introduce notation for oversampled regions. We denote by $\omega_i^{+}$ the oversampled region of $\omega_i\subset \omega_i^{+}$, defined by adding several fine- or coarse-grid layers around $\omega_i$.  We emphasize that the coarse-grid is too coarse to effectively resolve all heterogeneities and scales present in the coefficient $\kappa$, while the fine grid resolves all variations of $\kappa$ but it leads to a huge linear system that is not practical to solve.

\begin{figure}[tb]
  \centering
  \includegraphics[width=1.0 \textwidth]{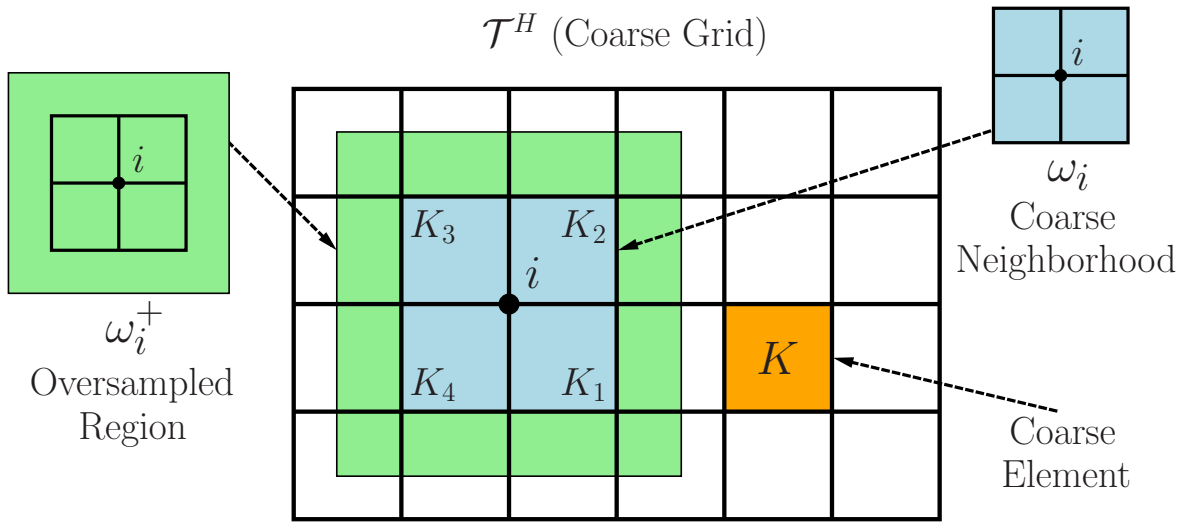}
  \caption{Illustration of a coarse neighborhood and oversampled domain. Here, $K$ is a coarse-grid block, $\omega_i$ is a coarse neighborhood of $x_i$, and
$\omega_i^+$ is an oversampled region}
  \label{schematic}
\end{figure}

\subsection{Generalized Multiscale Finite Element Method (GMsFEM)}

Throughout this paper, we use the continuous Galerkin formulation, and use $\omega_i$ as the support of basis functions. The regions $\omega_i^{+}$ are used to construct the multiscale basis functions.  For the purpose of this description, we formally denote the basis functions of the offline space $V_{\text{off}}$ by $\phi_k^{\omega_i}$.  The solution is sought as $u_H(x)=\sum_{i,k} c_{k}^i \phi_{k}^{\omega_i}(x)$, where $k$ denotes the basis function index in the domain $\omega_i.$  Once the basis functions are identified, we solve
\begin{equation}
\label{eq:globalG} a(u_H,v)=(f,v), \quad \text{for all} \, \, v\in
V_{\text{off}},
\end{equation}
and
\[
a(u,v)=\int_D \kappa(x)\nabla u \cdot \nabla v.
\]

Now, we briefly describe GMsFEM. We consider oversampling for GMsFEM (see~\cite{eglp13oversampling, egh12}) that uses harmonic snapshots. That is, snapshots vector are obtained as harmonic extensions of some subset of all possible boundary conditions on the oversampled domain.  We construct a snapshot space $V_{\text{snap}}^{\omega_i^+}$.  Construction of the snapshot space involves solving local problems and we detail the standard process below~\cite{eglp13oversampling, egh12}.

The snapshot space consists of harmonic extensions of fine-grid functions defined on the boundary of $\omega_i^{+}$. More precisely, for each fine-scale function with support on the boundary of the oversampled coarse domain, $\delta_l^h(x)$, we solve a local problem. Let $\delta_l^h(x_k)=\delta_{lk}$ be one of these functions where for all $l,k\in \textsl{J}_{h}(\omega_i^{+})$, where $\textsl{J}_{h}(\omega_i^{+})$ is the fine-grid boundary nodes on $\partial\omega_i^{+}$ and $\delta_{lk}$ is Kronecker's delta with value 1 for $k=l$ and value 0 otherwise. Thus, the local problem to solve is
\begin{equation}
\label{eqn:snapshot}
-\mbox{div}(\kappa (x)\nabla  \psi_{l,\omega_i}^{+, \text{snap}})=0\ \ \text{in} \ \omega_i^{+}
\end{equation}
subject to boundary condition, $ \psi_{l,\omega_i}^{+, \text{snap}}=\delta_l^h(x)$ on $\partial\omega_i^{+}$.  We form  the snapshot matrices by placing the solutions of these local problems as the rows of this matrix (throughout, for notational convenience, we do not distinguish between the fine-grid vectors and their continuous representations)
\[
\Psi_{\omega_i}^{+,\text{snap}}=[\psi_{1,\omega_i}^{+, \text{snap}};...;\psi_{l,\omega_i}^{+, \text{snap}};....].
\]
We define the vectors $\psi_{l,\omega_i}^{\text{snap}}$ as the restrictions of the snapshot vectors $\psi_{1,\omega_i}^{+, \text{snap}}$ to degrees of freedom in $\omega_i$ by taking their values at the fine-grid nodes of $\omega_i$. Considering these vectors, we form the snapshot matrix in $\omega_i$
\begin{equation} \label{eq:snapdef} \Psi_{\omega_i}^{\text{snap}}=[\psi_{1,\omega_i}^{\text{snap}};...;\psi_{l,\omega_i}^{\text{snap}};....].  \end{equation}

Next, we discuss the construction of a smaller offline space using an eigenvalue problem~\cite{egh12}.  In order to construct an offline space $V_{\text{off}}$, we  reduce the dimension of the snapshot space using an auxiliary spectral decomposition.  We seek a subspace of the snapshot space  where to approximate any element of the snapshot space in the appropriate norm defined via the following auxiliary bilinear forms.  For each $\omega_i$, we define
\begin{eqnarray}
A^{\text{off}} \Theta_k^{\text{off}} &=& \lambda_k^{\text{off}} S^{\text{off}}\Theta_k^{\text{off}},  \label{offeig1}
\end{eqnarray}
where
\begin{equation*}
 \displaystyle A^{\text{off}}= [a^{\text{off}}_{mn}] = \int_{\omega_i} {\kappa}(x) \nabla \psi_{m,\omega_i}^{+,\text{snap}} \cdot \nabla \psi_{n,\omega_i}^{+,\text{snap}} = \Psi_{\omega_i}^{\text{snap}} {A}  (\Psi_{\omega_i}^{\text{snap}})^T
 \end{equation*}
and
\begin{equation*}
 \displaystyle S^{\text{off}} = [s^{\text{off}}_{mn}] = \int_{\omega_i} {\widetilde{\kappa}}(x)  \psi_{m,\omega_i}^{+,\text{snap}}  \psi_{n,\omega_i}^{+,\text{snap}} = \Psi_{\omega_i}^{\text{snap}} {S}  (\Psi_{\omega_i}^{\text{snap}})^T.
 \end{equation*}
 The coefficient $\widetilde{{\kappa}}(x)$ uses multiscale partition of unity functions (cf.,~\cite{egh12}) which is described in~\eqref{def:tildekappa}. Here, $A$ and $S$ are fine-grid stiffness and mass matrices in the coarse region.  To generate the offline space, we then choose the smallest $M_{\text{off}}$ eigenvalues of Eqn.~\eqref{offeig1} for each $\omega_i^+$ and form the corresponding eigenvectors in the respective space of snapshots by setting $\psi_{k,\omega_i}^{+,\text{off}} = \sum_j \Theta_{kj}^{\text{off}} \psi_{j,\omega_i}^{+,\text{snap}}$ (for $k=1,\ldots, M_{\text{off}}$), where $\Theta_{kj}^{\text{off}}$ are the components of the vector $\Theta_{k}^{\text{off}}$. We then create the offline matrices
 $$
\Psi_{\omega_i}^{+,\text{off}} = \left[ \psi_{1,\omega_i}^{+,\text{off}}, \ldots, \psi_{\omega_i,M_{\text{off}}}^{+,\text{off}} \right]
\quad \text{and} \quad \Psi_{\omega_i}^{\text{off}}= \left[ \psi_{1,\omega_i}^{\text{off}}, \ldots, \psi_{M_{\text{off}},\omega_i}^{\text{off}} \right],
$$
where $\psi_{k,\omega_i}^{\text{off}}$ is the restriction of $\psi_{k,\omega_i}^{+,\text{off}}$ to $\omega_i$.  To construct multiscale basis functions, we multiply the dominant eigenvectors by a partition of unity functions $\chi_i$ that are supported in $\omega_i$, such that $\sum_i \chi_i=1$. More precisely, the offline space is composed of the following basis functions,
\begin{align}\label{eqn:global_offline}
\phi_k^{\omega_i}=\chi_i \psi_k^{\omega_i}.
\end{align}
We can choose the partition of unity functions to be multiscale finite element basis functions; see~\cite{eh09}. Let $\chi_i^0$ be the nodal basis of the standard finite element space $W_H$.  For example, $W_H$ consists of piecewise linear functions if ${\cal T}_H$ is a triangular partition or $W_H$ consists of piecewise bi-linear functions if ${\cal T}_H$ is a rectangular partition.``Standard'' multiscale finite element basis functions coincide with $\chi_i^0$ on the boundaries of the coarse partition and satisfy:
\begin{eqnarray}
\mbox{div}(\kappa\nabla\chi_i^{ms})=0\ \ \mbox{in }K\in \omega_i,\quad
\chi_i^{ms}=\chi_i^0\ \ \mbox{in }\partial K,\ \ \mbox{ for all }\  K\in \omega_i,
\label{e}
\end{eqnarray}
where $K$ is a coarse grid block within $\omega_i$. In our numerical implementations, we take $\widetilde{\kappa}=\kappa$ for the computation of mass matrix. However, one can take a weighted permeability field (see detailed discussion in~\cite{egh12}) such as
\begin{equation}
\label{def:tildekappa}
\widetilde{\kappa} =\sum_i \kappa |\nabla \chi_i^{+}|^2.
\end{equation}

\section{Randomized Oversampling}
\label{sec:algorithm}

As described above, a usual choice for the snapshot space consists of the harmonic extension of fine-grid functions defined on the boundary of $\omega_i^{+}$.  This type of snapshot is complete in the sense that it captures all the boundary information of the solution. However, the computational cost is expensive since, in each local coarse neighborhood, $O(n^{\omega_i^+})$ number of local problems is required to solve. Here, $n^{\omega_i^+}$ denotes the number of fine grids on the boundary of $\omega_i^+$.  A smaller yet accurate snapshot space is needed to build a more efficient multiscale method.

In the following, we generate inexpensive snapshots using random boundary conditions. That is, instead of solving Eqn.~\eqref{eqn:snapshot} for each fine boundary node, we solve a small number of local problems imposed with random boundary conditions:
 \begin{align}
 \label{eq:random bc}
  \psi_{l,\omega_i}^{+, \text{rsnap}}=r_{l} \text{ on } \partial\omega_i^{+},
 \end{align}
 where $r_{l}$ are independent identically distributed (i.i.d.) standard Gaussian random vectors on the fine-grid nodes of the boundary. Then, we can obtain the local random snapshot on the target domain $\omega_i$ by restricting the solution of this local problem, $\psi_{l,\omega_i}^{+, \text{rsnap}}$ to $\omega_i$ (which is denoted by $\psi_{l,\omega_i}^{ \text{rsnap}}$). The space generated by $\psi_{l,\omega_i}^{ \text{rsnap}}$ is a subspace of the space generated by all local snapshots $\Psi_{\omega_i}^{\text{snap}}$. Therefore, there exists a randomized matrix $\mathcal{R}$ with rows composed by the random boundary vectors $r_{l}$, such that,
\begin{align}\label{eqn:random_snapshots}
 \Psi_{\omega_i}^{\text{rsnap}}=\mathcal{R}\Psi_{\omega_i}^{\text{snap}}.
 \end{align}
 Using these snapshots, we follow the procedure in the previous section to generate multiscale basis functions. Below, we summarize the algorithm. We denote the buffer number $p_{\text{bf}}^{\omega_i}$ for each $\omega_i$ and the number of local basis functions by $k_{\text{nb}}^{\omega_i}$ for each $\omega_i$. Later on, we use the same buffer number for all $\omega_i$ and simply use the notation $p_{\text{bf}}$.
 \begin{table*}[h]\label{algorithm:random_snapshots}
 \caption{Randomized GMsFEM Algorithm}
 \begin{tabular}{r l}
 \hline\hline
 \\
    \textbf{Input}:& Fine grid size $h$, coarse grid size $H$,
oversampling size $t$, buffer number $p_{\text{bf}}^{\omega_i}$ for each $\omega_i$, \\
    & the number of local basis functions $k_{\text{nb}}^{\omega_i}$ for each $\omega_i$;\\
    \textbf{output}: & Coarse-scale solution $u_H$.\\

  1.& Generate oversampling region for each coarse block: $\mathcal{T}^H$, $\mathcal{T}^h$, and $\omega_i^{+}$; \\

  2.& Generate $k_{\text{nb}}^{\omega_i}+p_{\text{bf}}^{\omega_i}$ random vectors $r_l$ and obtain randomized snapshots in $\omega_i^{+}$ (Eqn.~\eqref{eq:random bc});  \\

& Add a snapshot that represents the constant function on $\omega_i^+$;\\
  3. & Obtain $k_{\text{nb}}^{\omega_i}$ offline basis by a spectral decomposition (Eqn.~\eqref{offeig1} restricted to random snapshots); \\

4. & Construct multiscale basis functions (Eqn.~\eqref{eqn:global_offline}) and solve (Eqn.~\eqref{eq:globalG} ).\\

 \hline\hline
 \end{tabular}
 \end{table*}
%


\section{Numerical results}
\label{sec:numresults}

In this section, we present representative numerical experiments that demonstrate the good performance of the randomized snapshots algorithm. We take the domain $D$ as a square, set the forcing term $f=0$ and use a linear boundary condition for the problem~\eqref{eq:original},  that is, $u=x_1+x_2$ on $\partial D$ where $x_i$ are the Cartesian components of each point.  In our numerical simulations, we use a  coarse grid of $10 \times 10$ blocks, and each coarse grid block is divided into $10\times 10$ fine grid blocks. Thus, the whole computational domain is partitioned by a $100 \times 100$ fine grid. We use a few multiscale basis functions per coarse block. These coarse basis set defines the problem size.  We assume that the fine-scale solution is obtained by discretizing problem~\eqref{eq:original} by the classical conforming piecewise bilinear elements on the fine grid.  To test the performance of our algorithm, we consider two permeability fields $\kappa$ as depicted in Figure~\ref{fig:perms}.  The first permeability field (left figure) has more connected regions and they are more irregular compared to the second permeability field (right figure).  We observed similar behavior for these two cases, and therefore we focus on the numerical results for the first permeability field (Figure~\ref{fig:perm_cross}).

In Table~\ref{table:Reverse of HCCResult}, a comparison between using all snapshots and the randomized snapshots is shown.  The first column shows the dimension of the offline space for each test. We choose $5$, $10$, $15$, $20$, and $25$ basis functions per each interior node (in addition to the constant eigenvectors) and use an oversampling layer that consists of   three fine-grid blocks ($t=3$). The offline space $V_{\text{off}}$ is defined via a local spectral decomposition as specified in Section~\ref{sec:algorithm}. The snapshot ratio is calculated as the number of randomized snapshots divided by the number of the full  snapshots. This ratio is displayed in the second column.  Here, the total number of snapshots refers to the number of boundary nodes of the oversampled region. In our numerical results, an oversampled region has $26\times 26$ fine-grid dimension and there are total $104$ snapshots if all boundary nodes are used. For example, when the dimension of the offline space is $931$, we only compute $14$ snapshots instead of $104$. This ratio gives the information on the computational savings of our algorithm compared to the previous algorithm using all snapshots. The next two columns shows the relative weighted $L^2$ error and relative energy error using the full snapshots. The weighted $L^2$ norm and energy norm are defined as
\[
\|u\|_{L^2_\kappa}=\left(\int_D \kappa u^2\right)^{1\over 2} \quad \mbox{ and } \quad
\ \ \|u\|_{H^1_\kappa}=\left(\int_D \kappa |\nabla u|^2\right)^{1\over 2},
\]
respectively. Further, the relative weighted $L^2$ error and relative energy error using the randomized snapshots are shown in the last two columns.  From this table, we observe that the randomized algorithm converges in the sense that the relative error decreases as we increase the dimension of the coarse space. Comparing the fourth column with the last column, we conclude that the accuracy when using the randomized snapshots is similar to using all snapshot vectors. The latter has much larger dimension as shown in the second column  that shows the percentage of the snapshots computed.  Therefore, the proposed method is an order of magnitude faster while having comparable accuracy. For example, when the dimension of the offline space is $931$, the accuracy of the methods is comparable while randomized snapshot approach uses only $13.46$\% of the snapshots.   Similar results are obtained when the fine mesh is refined to $200\times 200$. In particular, with the offline space with the dimension $931$ and the snapshot ratio of $10$\%, we obtain similar $L^{2}_\kappa(D)$ and $H^{1}_\kappa(D)$ errors which are $1.28$\% and $24.02$\%.   The behavior is similar when we use the permeability field in Fig.~\ref{fig:perm_hcc}. The results are displayed in Table~\ref{table:Reverse of HCCResult1}.  Here, $p_{\text{bf}}$ refers to the buffer that is used to compute the eigenvectors. For example, $p_{\text{bf}}=4$ means that we use $n+4$ snapshots to compute $n$ basis functions for each coarse block.

\begin{table}[htb!]
\centering
\caption{Numerical results comparing the results between using all harmonic snapshots and the snapshots generated by random boundary conditions with $p_{\text{bf}}=4$, $\kappa$ as shown in Fig.~\ref{fig:perm_cross}.  In the parenthesis, we show a higher value of the snapshot ratio.}
\label{table:Reverse of HCCResult}
\small
\begin{tabular}{|r|c|c|c|c|c|c|c}
\hline
\multirow{2}{*}{$\text{dim}(V_{\text{off}})$}  &
\multirow{2}{*}{Snapshot ratio (\%)}  &
\multicolumn{2}{c|}{   All snapshots (\%) } &
\multicolumn{2}{c|}{   Few randomized snapshots (\%) }\\
\cline{3-6} {}&{}&
$\hspace*{0.8cm}   L^{2}_\kappa(D)   \hspace*{0.8cm}$ &
$\hspace*{0.8cm}   H^{1}_\kappa(D)  \hspace*{0.8cm}$
&
$\hspace*{0.8cm}   L^{2}_\kappa(D)   \hspace*{0.8cm}$ &
$\hspace*{0.8cm}   H^{1}_\kappa(D)  \hspace*{0.8cm}$
\\
\hline\hline
       $526$ &$8.65 (15.38)$  &  $0.87$    & $18.15$&  $2.81(1.38)$    & $44.95 (26.04)$   \\
      $931$ &$13.46$   &    $0.64$    & $14.85$ &    $1.04$    & $23.61$\\
      $1336$ &$18.27$   &    $0.55$    & $13.59$ &    $0.70$    & $18.08$\\
       $1741$&$23.08$   &   $0.50$    & $12.69$ &    $0.64$    & $15.91$\\
       $2146$&$27.88$   &   --  &--&$0.54$&$14.16$  \\
\hline
\end{tabular}
\end{table}

In Fig.~\ref{fig:solns}, the fine-scale solution, coarse-scale solution using all snapshots and coarse-scale solution using randomized snapshots are  shown.  They are obtained using the second test (when the dimension of the offline space is $931$) in Table~\ref{table:Reverse of HCCResult}.  These two coarse-scale solutions are a good approximation of the fine-scale solution. This is corroborated in Fig.~\ref{fig:errors}, where we plot the absolute error of the two solutions.

\begin{figure}[htb!]
  \centering
  \subfigure[Fine-scale solution.]{\label{fig:perm_cross1}
    \includegraphics[width = 0.315\textwidth]{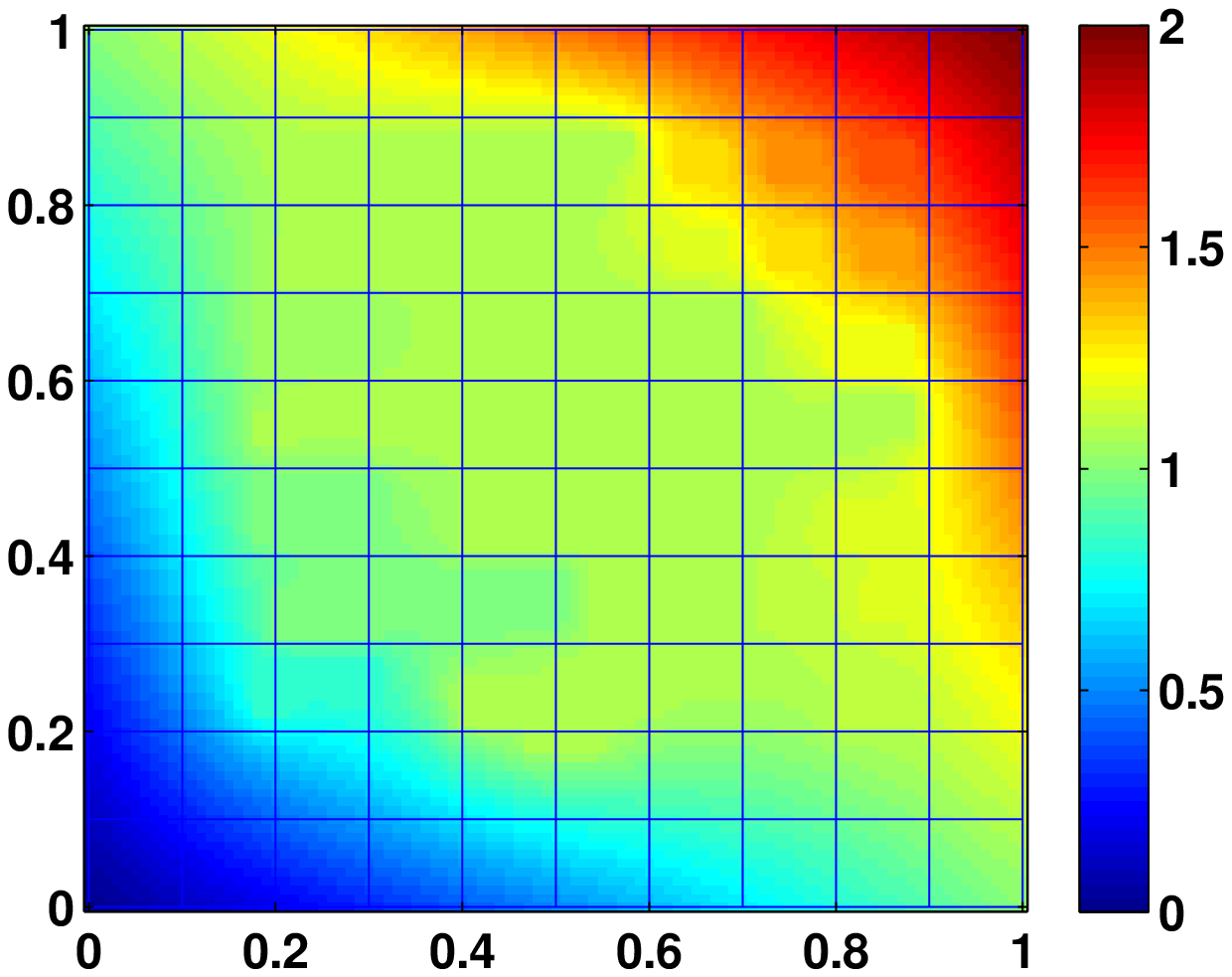}
   }
   \subfigure[coarse-scale solution using the full snapshots]{\label{fig:umsfineSS}
    \includegraphics[width = 0.315\textwidth]{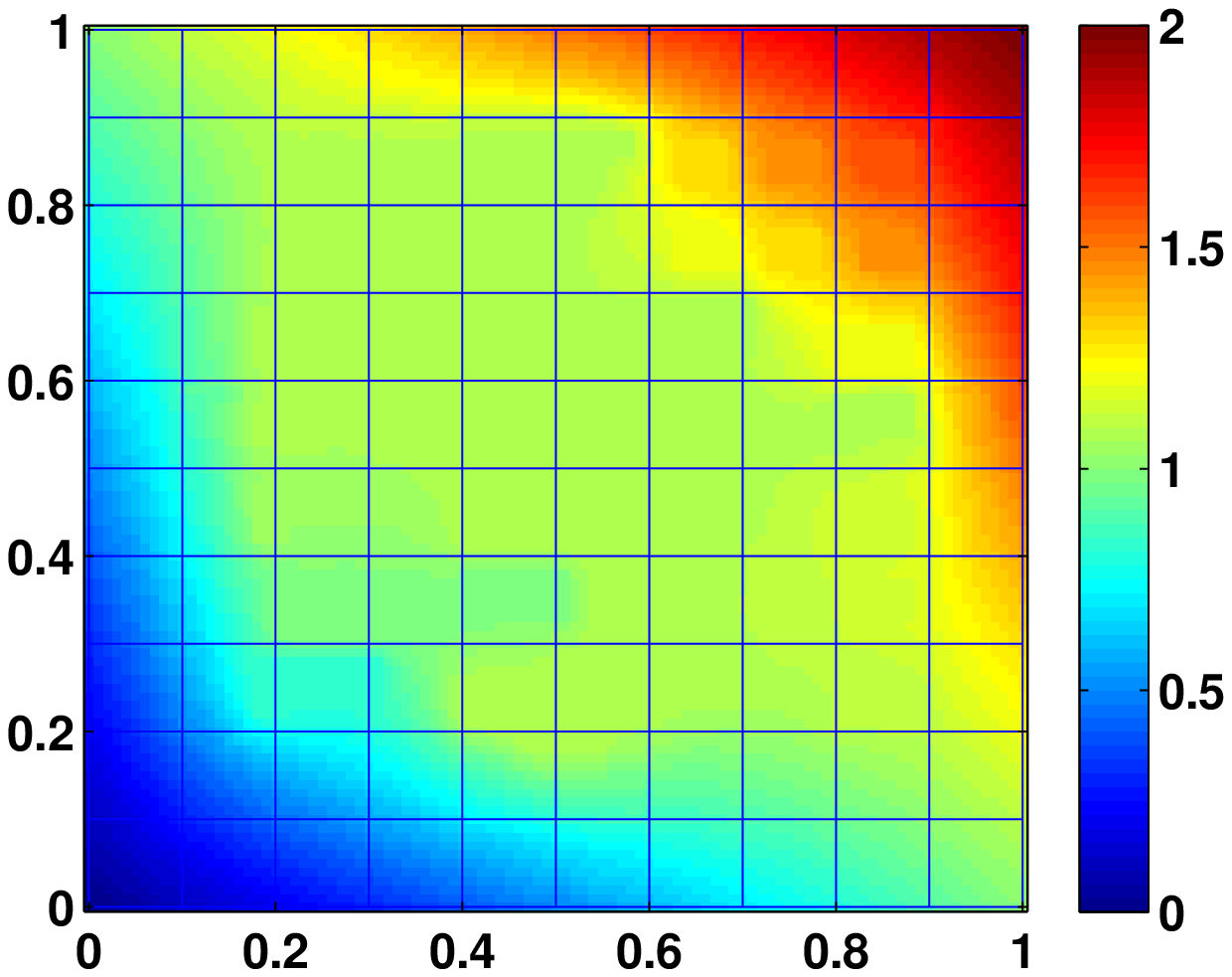}
   }
  \subfigure[coarse-scale solution using the randomized snapshots]{\label{fig:umsfineSS_random}
     \includegraphics[width = 0.315\textwidth]{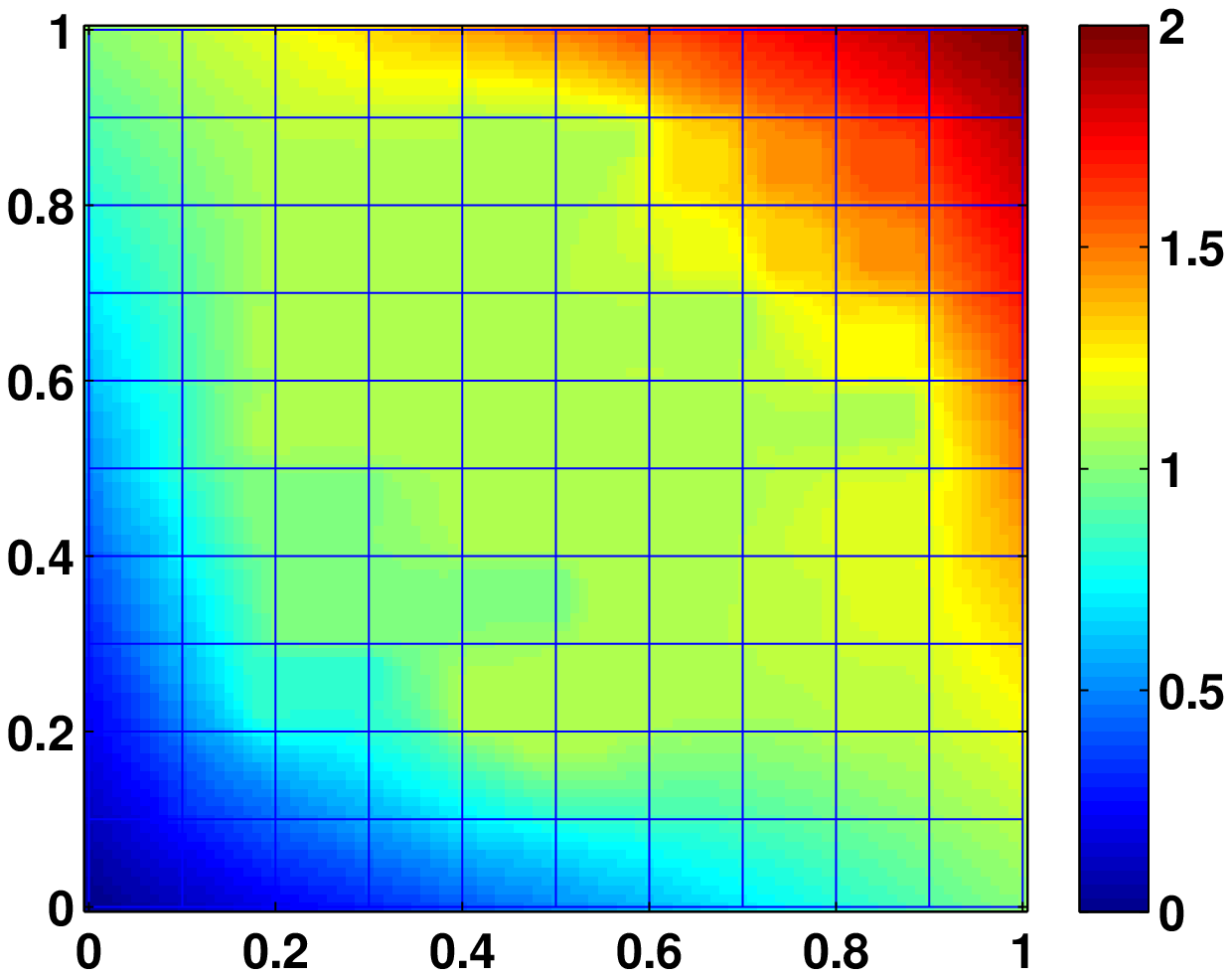}
  }
 \caption{The fine-scale solution and coarse-scale solutions correspond to Fig.~\ref{fig:perm_cross}. }\label{fig:solns}
\end{figure}
\begin{figure}[htb!]
  \centering
   \subfigure[Absolute error using the full snapshots]{\label{fig:error_full}
    \includegraphics[width = 0.45\textwidth]{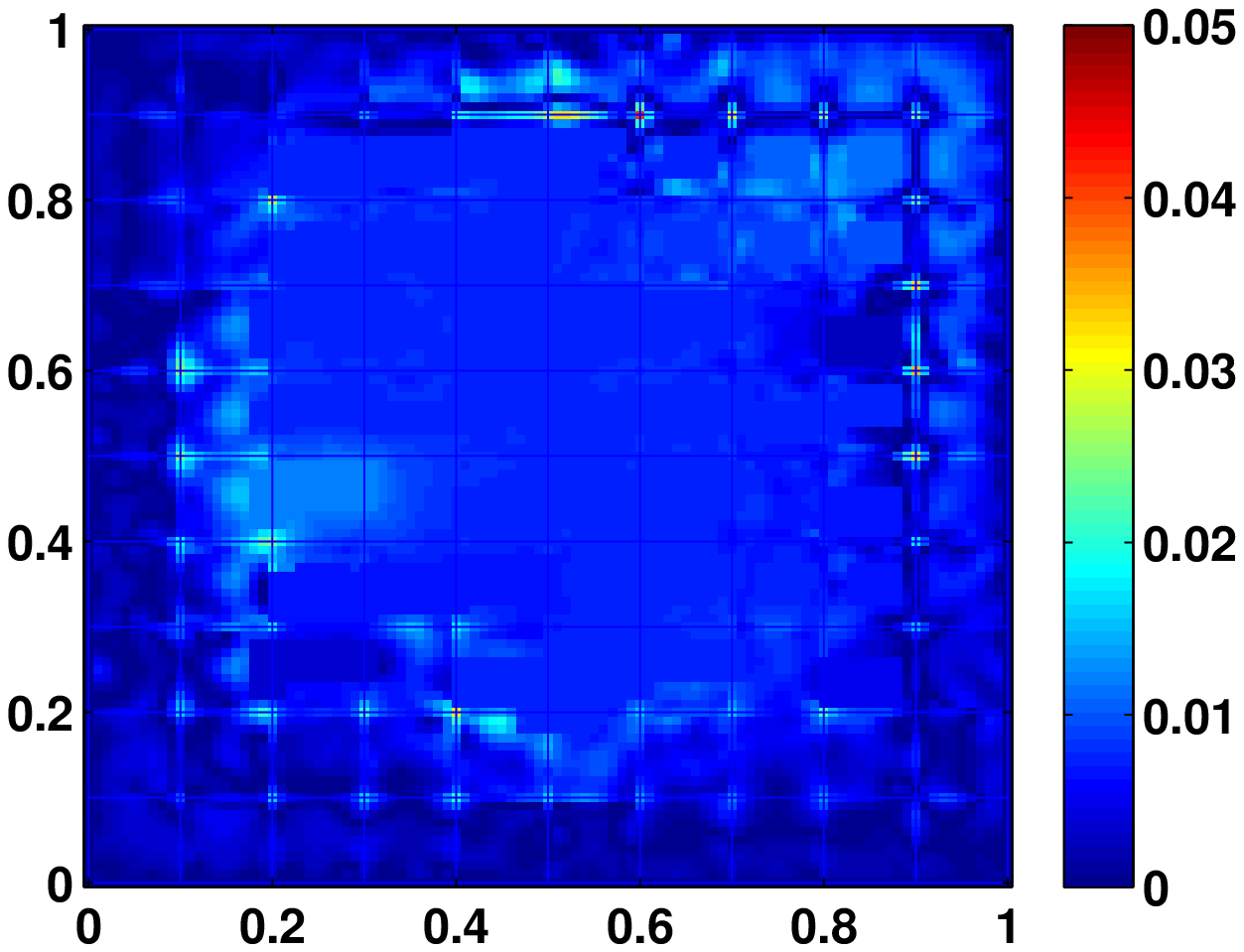}
   }
  \subfigure[Absolute error using the randomized snapshots]{\label{fig:error_random}
     \includegraphics[width = 0.45\textwidth]{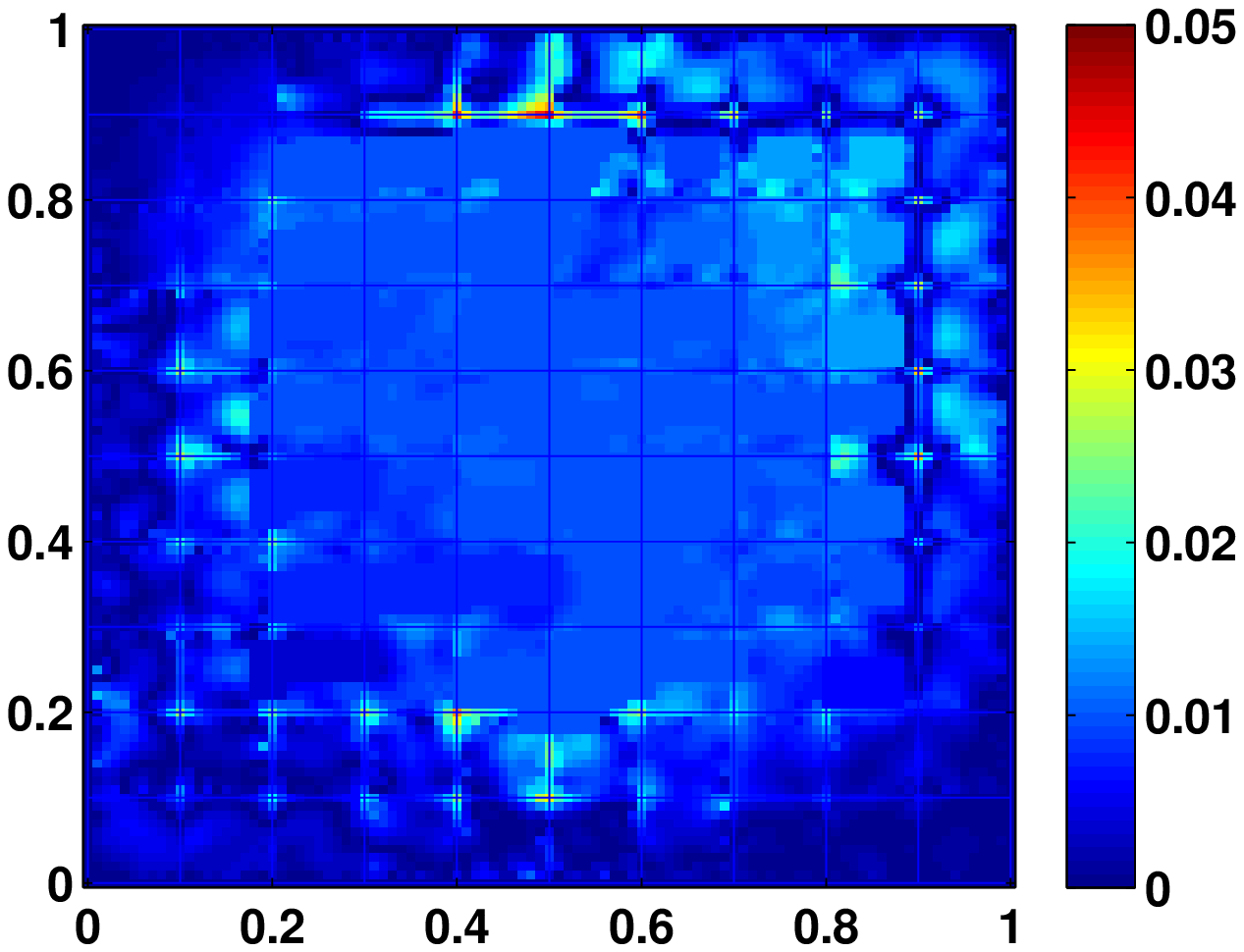}
  }
 \caption{The absolute errors correspond to Fig.~\ref{fig:perm_cross} using full snapshots and random snapshots. }\label{fig:errors}
\end{figure}
\begin{table}[htb!]
\centering
\caption{Numerical results comparing the results between using all harmonic snapshots and the snapshots generated by random boundary conditions with $p_{\text{bf}}=4$, $\kappa$ as shown in Fig.~\ref{fig:perm_hcc}.}
\label{table:Reverse of HCCResult1}
\small
\begin{tabular}{|r|c|c|c|c|c|c|c}
\hline
\multirow{2}{*}{$\text{dim}(V_{\text{off}})$}  &
\multirow{2}{*}{snapshot ratio (\%)}  &
\multicolumn{2}{c|}{ all snapshots (\%) } &
\multicolumn{2}{c|}{  using the randomized snapshots (\%) } \\
\cline{3-6} {}&{}&
$\hspace*{0.8cm}   L^{2}_\kappa(D)   \hspace*{0.8cm}$ &
$\hspace*{0.8cm}   H^{1}_\kappa(D)  \hspace*{0.8cm}$
&
$\hspace*{0.8cm}   L^{2}_\kappa(D)   \hspace*{0.8cm}$ &
$\hspace*{0.8cm}   H^{1}_\kappa(D)  \hspace*{0.8cm}$
\\
\hline\hline
       $526$ &$8.65 (15.38)$    &  $0.71$    & $20.98$&  $1.33(0.80)$    & $33.76(24.14)$  \\
      $931$ &$13.46$   &    $0.51$    & $17.33$ &    $0.66$    & $21.67$\\
      $1336$ &$18.27$   &    $0.45$    & $15.83$ &    $0.53$    & $18.26$\\
       $1741$&$23.08$   &   $0.40$    & $14.66$ &    $0.48$    & $17.13$\\
       $2146$&$23.88$   &   --  &--&$0.43$&$15.39$  \\

\hline
\end{tabular}
\end{table}%
Next, we investigate the effect of the buffer number $p_{\text{bf}}$ on the accuracy of the coarse solution. We test a series of simulations with different $p_{\text{bf}}$ while keeping the   coefficients and meshes fixed. The results are presented in Table~\ref{table:bufferEffect}, which shows that a larger buffer coefficient   decreases the relative energy error. However, there is no need for very large values.  If we take $p_{\text{bf}}=4$, we can get a coarse solution with error of $15.51\%$, while obtaining a $14.49\%$ error if using $p_{\text{bf}}=20$ at the cost of solving 16 extra local problems for each inner coarse node.

\begin{table}[htb!]
\centering
\caption{Numerical results for different $p_{\text{bf}}$ and using 20 local basis in each coarse neighborhood, $\kappa$ as shown in Fig.~\ref{fig:perm_cross}.}
 \label{table:bufferEffect}
\begin{tabular}{|r|c|c|c|c|}
\hline
\multirow{2}{*}{$p_{\text{bf}}$}  &
\multicolumn{2}{c|}{  $\|u-u^{\text{off}} \|$ (\%) }  \\
\cline{2-3} {}&
$\hspace*{0.8cm}   L^{2}_\kappa(D)   \hspace*{0.8cm}$ &
$\hspace*{0.8cm}   H^{1}_\kappa(D)  \hspace*{0.8cm}$
\\
\hline\hline
       $4$     &  $0.62$    & $15.51$  \\
      $10$    &    $0.62$    & $15.08$ \\
      $15$    &     $0.57$  &$14.70$ \\
       $20$   &   $0.57$  &$14.49$  \\

\hline
\end{tabular}
\end{table}
%

\begin{table}[htb!]
\centering
\caption{Numerical results for different oversampling domain $\omega_i^{+}=\omega_i + t$ and using 20 local basis in each coarse neighborhood, $p_{\text{bf}}=4$, $\kappa$ as shown in Fig.~\ref{fig:perm_cross}.}
 \label{table:oversamplingEffect}
\begin{tabular}{|c|c|c|c|c|}
\hline
\multirow{2}{*}{$t$}  &
\multicolumn{2}{c|}{  $\|u-u^{\text{off}} \|$ (\%) }  \\
\cline{2-3} {}&
$\hspace*{0.8cm}   L^{2}_\kappa(D)   \hspace*{0.8cm}$ &
$\hspace*{0.8cm}   H^{1}_\kappa(D)  \hspace*{0.8cm}$
\\
\hline\hline
       $0$     &  $1.52$    & $23.26$  \\
      $2$    &    $0.61$    & $15.63$ \\
      $4$    &    $0.62$    & $15.56$ \\
      $7$    &     $0.59$  &$15.24$ \\
\hline
\end{tabular}
\end{table}
Lastly, numerical tests are conducted to study the influence of oversampling effects on the accuracy of the randomized snapshots. The simulation results are shown in Table~\ref{table:oversamplingEffect}. From this table, we observe that oversampling technique is needed to obtain an accurate solution. However, a larger oversampling domain is not necessary since it increases the computational cost of the solution, while no significant improvement in the solution accuracy is observed.

\subsection{Comparison of results of different spectral problems}

As we mentioned in the introduction, Section~\ref{sec:intro}, one can use solution-based boundary conditions to achieve higher accuracy compared to the random boundary conditions.  In this section, we demonstrate this.  The main idea behind this algorithm is to select boundary modes using a small spectral decomposition over the boundary layer $\mathcal{L}_i$ instead of the oversampling region $\omega_i^{+}$ that surrounds the boundary in the spectral problem Eqn.~\eqref{offeig1}. More precisely, we consider a local spectral problem in the layer of a few fine-grid blocks in the region that contains the boundary of $\omega_i$ (see Fig.~\ref{schematic_BL}). We choose a layer that has a thickness  of five fine-grid elements (two interior to $\omega_i$ and three on the immediate neighborhood of $\omega_i$). Furthermore, we select dominant eigenvectors (corresponding to smallest eigenvalues) by solving local eigenvalue problem in the strip. The local eigenvalue problem uses local stiffness and mass matrices (as in~\cite{ ge09_1,egw10}). This approach provides correct fine-scale features and we expect higher accuracy compared to the randomized snapshots.
\begin{figure}[tb]
  \centering
  \includegraphics[width=1.0 \textwidth]{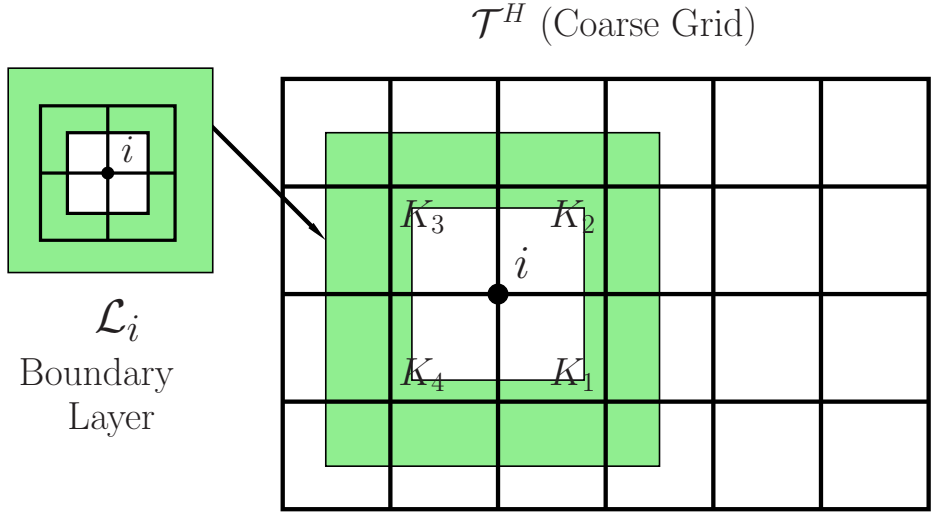}
  \caption{Illustration of a skin layer $\mathcal{L}_i$ that is used for computing boundary conditions for the snapshots in $\omega$.}
  \label{schematic_BL}
\end{figure}
The numerical results are shown in Table~\ref{table:skin_snapshots}.  Comparing the fourth column with the last column of Table~\ref{table:skin_snapshots}, we observe that this new algorithm is more accurate compared to the previous one. Taking the fifth row as an example, for the same dimension of the offline space, the new algorithm gives $14.97\%$ error while the previous algorithm ends with $17.13\%$. In general, one can apply randomized snapshot algorithms to reduce the computational cost associated with our new algorithm. That is, one can use randomized snapshots for the strip $\mathcal{L}_i$ to reduce the computational cost further.

\begin{table}[htb!]
\centering
\caption{Numerical results comparing the results between the snapshots obtained from  skin layer spectral problems and the snapshots generated by random boundaries with $p_{\text{bf}}=4$, $\kappa$ as shown in Fig.~\ref{fig:perm_hcc}. }
\label{table:skin_snapshots}
\small
\begin{tabular}{|r|c|c|c|c|c|c|c}
\hline
\multirow{2}{*}{$\text{dim}(V_{\text{off}})$}  &
\multirow{2}{*}{snapshot ratio (\%)}  &
\multicolumn{2}{c|}{ snapshots from skin layer (\%) } &
\multicolumn{2}{c|}{ randomized snapshots (\%) } \\
\cline{3-6} {}&{}&
$\hspace*{0.8cm}   L^{2}_\kappa(D)   \hspace*{0.8cm}$ &
$\hspace*{0.8cm}   H^{1}_\kappa(D)  \hspace*{0.8cm}$
&
$\hspace*{0.8cm}   L^{2}_\kappa(D)   \hspace*{0.8cm}$ &
$\hspace*{0.8cm}   H^{1}_\kappa(D)  \hspace*{0.8cm}$
\\
\hline\hline
       $526$ &$8.65$    &  $1.03$    & $26.51$&  $1.33$    & $33.76$  \\
      $931$ &$13.46$   &    $0.63$    & $18.64$ &    $0.66$    & $21.67$\\
      $1336$ &$18.27$   &    $0.48$    & $16.29$ &    $0.53$    & $18.26$\\
       $1741$&$23.08$   &   $0.42$    & $14.97$ &    $0.48$    & $17.13$\\
       $2146$&$27.88$   &   $0.39$    &$14.40$ & $0.43$ & $15.39$  \\

\hline
\end{tabular}
\end{table}
\subsection{A randomized multiscale adaptive algorithm}

In this section, we discuss how to efficiently use randomized snapshots within adaptive algorithms.  We use the error indicators developed in~\cite{ Chung_adaptive14}.  First, we briefly recall these error estimators.  Let $V_i = H^1_0(\omega_i)$, define a linear functional $R_i(v)$ on $V_i$ by
\begin{equation}
R_i(v) =  \int_{\omega_i} fv - \int_{\omega_i} a\nabla u_{\text{ms}}\cdot \nabla v,
\end{equation}
where the norm of $R_i$ is defined as
\begin{equation}
\| R_i \|_{V_i^*} = \sup_{v\in V_i} \frac{ |R_i(v)| }{\| v\|_{V_i}}.
\label{eq:Rnorm}
\end{equation}
Here {$\|v\|_{V_i} =(\int_{\omega_i} \kappa(x) | \nabla v|^2 \, dx)^{\frac{1}{2}}$. } In~\cite{Chung_adaptive14} it is shown that
\begin{eqnarray}
\| u-u_H\|_V^2 &\leq& C_{\text{err}}  \sum_{i=1}^N \|R_i\|^2_{V_i^*}  (\lambda^{\omega_i}_{l_i+1})^{-1}, \label{eq:res2}
\end{eqnarray}
where $C_{\text{err}}$ is a uniform constant and
$\lambda^{\omega_i}_{l_i+1}$ denotes the $({l_i+1})$-th eigenvalue over coarse neighborhood $\omega_i$ that corresponds to the first eigenvector excluded from the construction of $V_{\text{off}}$.   We define the error indicator in each coarse neighborhood as follows,
\begin{equation*}
\eta^2_i =
\|R_i\|^2_{V_i^*}  (\lambda^{\omega_i}_{l^m_i+1})^{-1},\quad \,\text{ for } H^{-1}\text{-based residual.}
\end{equation*}
The pivotal issue to solve is to generate   additional linearly independent basis for a selected coarse neighborhood $\omega_i$ for the current iteration. Specifically, those extra basis are required to be linearly independent from the basis in the previous iteration. In what follows, we describe a possible solution to this issue using the residue of a series of random basis and their projection onto the offline space of the previous iteration.

 \begin{table}
   \caption{Local basis enrichment algorithm}
   \label{algorithm:LocalBasisEnrichmentAlgorithm}
 \begin{tabular}{r l}
 \hline\hline
 \\
    \textbf{Input}:&an index of the coarse nodes $I$ selected by the error indicator for enrichment, \\
&the local offline space $\Psi_{\omega_i}^{\text{rsnap}}$,
 buffer number $c_{\text{bf}}^{\omega_i}$, \\
&an additional local basis number  $c_{\text{nb}}^{\omega_i} $ for each $i\in I$. \\
    \textbf{output}: &an enriched local offline space $\Psi_{\omega_i}^{\text{rsnap}}$ corresponds to each nodes in $I$.\\
  1.& Generate $c_{\text{nb}}^{\omega_i}+c_{\text{bf}}^{\omega_i}$ random vectors $r_l$ and obtain randomized snapshots in $\omega_i^{+}$ (Eqn.~\eqref{eq:random bc}). \\
  &Denote as $\phi_1, \cdots, \phi_{c_{\text{nb}}^{\omega_i}+c_{\text{bf}}^{\omega_i}}$ ;  \\
2. & A modification of the random basis obtained from Step 1:
$\tilde\phi_i=\phi_i-\sum\limits_{j=1}^{N}\frac{\langle \phi_i,\psi_j\rangle_M}{\langle \psi_j,\psi_j\rangle_M}\phi_i,$\\
&where $\psi_1, \cdots, \psi_{N}$ denote a series of basis of $\Psi_{\omega_i}^{\text{rsnap}}$ excluded the constant one; \\
  3. & Obtain $c_{\text{nb}}^{\omega_i}$ offline basis by a spectral decomposition (Eqn.~\eqref{offeig1}), \\
  & next, add in a snapshot that represent the constant function on $\omega_i^+$, \\
  &and denote the resulting vectors as $\Psi_{\omega_i}^{\text{enrich}}$;\\

4. & $\Psi_{\omega_i}^{\text{rsnap}}\equiv\Psi_{\omega_i}^{\text{rsnap}}\cup \Psi_{\omega_i}^{\text{enrich}}$.\\

 \hline\hline
 \end{tabular}
 \end{table}

 \begin{remark} The Step 2 in Table~\ref{algorithm:LocalBasisEnrichmentAlgorithm} is to guarantee that the added local basis are independent from the previous local basis in   the $M$-norm as defined in the next section.  In the randomized snapshots, we have added the constant local basis manually to guarantee that the multiscale basis are included. However, this constant basis should be excluded in Step 2 since the constant is not in the spectral vectors and if it is added, we can get linear dependency.  \end{remark}

 The numerical results are displayed in Table~\ref{table:perm_cross_adaptive}. First, we take five $(5)$ basis per coarse node. Then, we apply the multiscale adaptive algorithm proposed in~\cite{Chung_adaptive14} and identify the coarse nodes index $I$ requiring more basis. Set $c^{\omega_i}_{nb}=2$ and $c^{\omega_i}_{bf}=1$ and follow Table~\ref{algorithm:LocalBasisEnrichmentAlgorithm}, next, we generate $c^{\omega_i}_{nb}+c^{\omega_i}_{bf}=3$ local random basis for those nodes and use Step 2 to get three new linearly independent basis. Afterwards, a local spectral decomposition is performed to select two important basis from those three basis.  In the end, the corresponding multiscale basis functions are constructed and added to the coarse space.

 Comparing Tables~\ref{table:Reverse of HCCResult1} and~\ref{table:perm_cross_adaptive}, we observe that the randomized adaptive algorithm is  cheaper since much fewer basis functions are used to achieve comparable accuracy to that of the uniform increase of basis shown in Table~\ref{table:Reverse of HCCResult1}.  $2146$ basis functions are calculated to attain an energy error of $14.16\%$ in Table \ref{table:Reverse of HCCResult1}, while only $2061$ are necessary to get a smaller error of $13.90\%$ using the adaptive randomized algorithm. Here, we do not  discuss the computational cost of our adaptive algorithm and refer to \cite{Chung_adaptive14} for details. Our main goal in this section is simply to demonstrate how additional basis functions can be computed using a small set of new snapshots  that avoids storing all the eigenvectors. 
\begin{table}[htb!]
\centering
\caption{Numerical results using adaptive algorithm with $p_{\text{bf}}=4$, and  $5$ local basis per node at the beginning and   with two more basis for selected nodes,  $\kappa$ as shown in Fig.~\ref{fig:perm_cross}. }
\label{table:perm_cross_adaptive}
\small
\begin{tabular}{|r|c|c|}
\hline
\multirow{2}{*}{$\text{dim}(V_{\text{off}})$}  &
\multicolumn{2}{c|}{  using the usual snapshots (\%) }
 \\
\cline{2-3} {}&
$\hspace*{0.8cm}   L^{2}_\kappa(D)   \hspace*{0.8cm}$ &
$\hspace*{0.8cm}   H^{1}_\kappa(D)  \hspace*{0.8cm}$

\\
\hline\hline
       $526$     &  $4.11$    & $50.23$ \\
      $916$    &    $0.99$    & $21.65$ \\
      $1323$ &    $0.63$    & $17.33$ \\
       $1717$   &   $0.53$    & $15.10$\\
       $2061$&  $0.51$    & $13.90$\\

\hline
\end{tabular}
\end{table}


\section{Analysis}
\label{sec:analysis}
In the analysis described below, we first estimate the error due to the approximation using randomized snapshots.  In the first lemma, we compare an arbitrary snapshot obtained using all snapshot vectors and its approximation in the space of randomized snapshots. To avoid cumbersome notation, we denote the local snapshot matrix $\Psi_{\omega_i}^{\text{snap}}$ in (\ref{eq:snapdef})  by $\Psi$ and the local randomized snapshot matrix $\Psi_{\omega_i}^{\text{rsnap}}$ in~\eqref{eqn:random_snapshots} by $\Psi^{r}$.

The following lemma shows that the randomized snapshot $\Psi^{r}$ with $l$ random basis is a good approximation of the full snapshot $\Psi$ composed of $m$ basis, $m>l$. We use the notation $A\preceq B$ when $A \leq C B$ with $C$ being independent of    the size ratio between the coarse and fine meshes, and spatial scales. Throughout, $\|\cdot\|$ denotes the $l^2$ norm for vectors
and the $l^2$-based spectral norm for matrices, while $\|z\|_A=z^T A z$.
We remind that, 
throughout, for notational convenience, we do not distinguish between 
the fine-grid vectors and their continuous representations.

\begin{lemma}\label{lemma:1}
  Suppose $\Psi \in \mathbf{R}^{m\times n}$   of rank $m$, and $\mathcal{R}\in \mathbf{R}^{l\times m}$  whose entries are i.i.d. Gaussian random variables. Define $\Psi^{r}= \mathcal{R} \Psi$, then, for any $ \xi\in  \mathbf{R}^{m}$, there exists $\xi^{r}\in   \mathbf{R}^{l}$, such that
\begin{align}
\norm{\xi^{T}\Psi-(\xi^{r})^{T}\Psi^{r}}_{\widetilde{M}(\omega_i)}^{2}&=\int_{\omega_i} \widetilde{\kappa}|\xi^{T}\Psi-(\xi^{r})^{T}\Psi^{r}|^2
\preceq \left(\frac{\norm{\mathcal{H}^{(-1)}\mathcal{S}}+1}{\lambda_{k+1}}\right)^2\norm{\xi^{T}\Psi}_{A(\omega_i)}^2,&
\end{align}
where  $k<l<m<n$, and $\mathcal{S}$, $\mathcal{H}$, and $T$ are defined in Eqns.~\eqref{eq:Res} and~\eqref{eq:T}.

Here, $\lambda_{k+1}$ is the $(k+1)^{th}$ smallest diagonal value of $\Lambda$ defined in Eqn.~\eqref{eqn:eigen_value} and $A(\omega_{i})=(\int_{\omega_i}\;\kappa \nabla \phi_{j}\nabla \phi_{k})_{n\times n}$ with $\phi_j$ as the $j^{th}$ local fine-scale basis in the $\omega_i$. Besides, $\norm{\xi^{T}\Psi}_{\widetilde{M}(\omega_i)}=(\int_{\omega_i}\widetilde{\kappa} \xi^{T}\Psi \xi^{T}\Psi)^{1\over 2}$, $\norm{\xi^{T}\Psi}_{A(\omega_i)}=(\int_{\omega_i} \kappa \nabla(\xi^{T}\Psi)\cdot\nabla( \xi^{T}\Psi))^{1\over 2}$.  \end{lemma}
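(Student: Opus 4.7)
The plan is to reduce the statement to a finite-dimensional randomized range-finding problem for the generalized eigenpair $(A^{\text{off}}, S^{\text{off}})$, and then apply the Halko--Martinsson--Tropp framework (the cited Lemma~18) with careful bookkeeping of the $S^{\text{off}}$-weighted inner product.

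First, I would diagonalize the pencil: there exists $\Theta \in \mathbf{R}^{m\times m}$ with $\Theta^T S^{\text{off}} \Theta = I$ and $\Theta^T A^{\text{off}} \Theta = \Lambda = \mathrm{diag}(\lambda_1,\ldots,\lambda_m)$, ordered so that the first $k$ columns correspond to the eigenvalues retained in the offline space. Changing coordinates via $c = \Theta^{-1}\xi$ turns the quadratic forms into $\|\xi^T\Psi\|_{\widetilde M(\omega_i)}^2 = \|c\|^2$ and $\|\xi^T\Psi\|_{A(\omega_i)}^2 = c^T \Lambda c$, so the task becomes: approximate $c^T \Theta^T \Psi$ by an element of the row span of $\mathcal{R}\Psi$, with both norms now expressed directly in terms of $c$ and $\Lambda$.

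Second, I would split the eigenbasis into a dominant block $\Theta_{\le}$ (first $k$ columns) and a tail block $\Theta_{>}$. Writing $c=(c_{\le},c_{>})$ and setting $\mathcal{S}:=\mathcal{R}\Theta_{\le}$ and $\mathcal{H}:=\mathcal{R}\Theta_{>}$, possibly up to the reweighting that enters the paper's subsequent definitions of $\mathcal{S}$, $\mathcal{H}$, and $T$, I would construct $\xi^r$ in the HMT style: choose $(\xi^r)^T$ so that $(\xi^r)^T\mathcal{R}$ best reproduces $c_{\le}^T$ on the dominant coordinates, which is feasible because $\mathcal{S}$ is an $l\times k$ Gaussian matrix with $l>k$ and hence almost surely has a left inverse. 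The resulting error decomposes into two parts: the leakage of the dominant reconstruction into the tail coordinates, which produces the factor $\|\mathcal{H}^{(-1)}\mathcal{S}\|+1$, and the unavoidable truncation of the tail component $c_{>}$, which is controlled by the spectral-gap estimate
\begin{equation*}
\|c_{>}\|^2 \;\le\; \lambda_{k+1}^{-1}\,c_{>}^T \Lambda_{>} c_{>} \;\le\; \lambda_{k+1}^{-1}\,\|\xi^T\Psi\|_{A(\omega_i)}^2.
\end{equation*}

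Combining these contributions yields the claimed bound, up to identification with the specific $\mathcal{S}$, $\mathcal{H}$, $T$ appearing in the paper. I expect the main obstacle to be the bookkeeping of norms when the eigenbasis $\Theta$ is $S^{\text{off}}$-orthonormal rather than Euclidean-orthonormal: this affects the joint distribution of $\mathcal{R}\Theta$ and must be handled carefully so that Martinsson's Lemma~18, stated for standard Gaussian matrices, can be applied after a change of variables. A further subtlety is reconciling the exponent of $\lambda_{k+1}$: the quadratic appearance of $\lambda_{k+1}^{-1}$ in the final bound suggests that one factor arises from the spectral-gap estimate above, while a second comes either from the reweighting absorbed into $\mathcal{H}$ and $\mathcal{S}$ or from a second spectral-gap application within the leakage term. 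Pinning down which route the authors take is where the careful work lies.
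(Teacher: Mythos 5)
Your construction is essentially the paper's: diagonalize the pencil formed by the stiffness and weighted mass forms in the snapshot coordinates, set $\xi^{r}=F^{T}\xi$ with $F$ built from the pseudo-inverse of the Gaussian matrix restricted to the dominant eigenblock (Martinsson's Lemma~18), and split the error into a leakage term carrying $\norm{\mathcal{H}^{(-1)}\mathcal{S}}$ and a tail-truncation term. The one point you leave open --- where the second power of $\lambda_{k+1}^{-1}$ comes from --- is resolved in the paper by the opposite normalization to yours: the paper takes $U$ with $U^{T}\Psi A(\omega_{i})\Psi^{T}U=I$ and $U^{T}\Psi\widetilde{M}(\omega_{i})\Psi^{T}U=\Lambda=\mathrm{diag}(1/\lambda_{1},\ldots,1/\lambda_{m})$, so that $\norm{\xi^{T}U^{-T}}=\norm{\xi^{T}\Psi}_{A(\omega_i)}$ falls out as an identity (no spectral-gap inequality is needed for the right-hand side) and the entire $\lambda_{k+1}$-dependence is carried by the tail block $T$ of $\Lambda^{1/2}$, which multiplies both the leakage and the truncation contributions. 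Note, however, that with that convention $\norm{T}=\lambda_{k+1}^{-1/2}$, so the computation as written actually delivers a single factor of $\lambda_{k+1}^{-1}$ in the squared bound --- the same power your spectral-gap route gives --- and the paper's intermediate claim $\norm{T}\le 1/\lambda_{k+1}$ holds only when $\lambda_{k+1}\le 1$; your version of the exponent is the one the argument supports. Two smaller points: your labels are swapped relative to~\eqref{eq:Res}, where $\mathcal{H}$ is the $l\times k$ \emph{dominant} block that gets pseudo-inverted and $\mathcal{S}$ is the $l\times(m-k)$ tail block; and your concern about applying Lemma~18 to $\mathcal{R}U^{-T}$ when $U$ is not Euclidean-orthonormal is legitimate but is deliberately deferred --- the lemma itself is deterministic given $\mathcal{H}$ and $\mathcal{S}$, and the probabilistic control of $\norm{\mathcal{H}^{(-1)}\mathcal{S}}$, including the Gram--Schmidt correction for the non-orthonormal columns of $U^{-T}$, is carried out separately in Remark~\ref{rmk:res}.
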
 
\begin{proof}
Denote $\widetilde{M}(\omega_{i})=(\int_{\omega_i}\;\widetilde{\kappa} \phi_{j}\phi_{k})_{n\times n}$ with $\phi_j$ as the $j^{th}$ local fine-scale basis in $\omega_i$.  The matrix $\widetilde{M}(\omega_{i})$ is symmetric positive definite. Besides, $A(\omega_{i})$ is symmetric semi positive definite. Thus, there exists an $m\times m$ matrix $U$, such that
\begin{align}\label{eqn:eigen_value}
U^{T}\Psi \widetilde{M}(\omega_{i})\Psi^{T} U=\Lambda, \text{ and } U^{T}\Psi A(\omega_{i})\Psi^{T}U=I,
\end{align}
where $I$ is an identity matrix and $\Lambda$ denotes a diagonal matrix with decreasing diagonal values
\[
\displaystyle\frac{1}{\lambda_1}, \frac{1}{\lambda_2},\cdots, \frac{1}{\lambda_m}.
\]
Define $X=U^{-T}\Lambda^{1\over 2}$, then we obtain $XX^{T}=\Psi \widetilde{M}(\omega_{i})\Psi^{T}$.

Suppose $F$ is a matrix of dimension $m\times l$, take $\xi^{r}=F^{T}\xi$. Then
\begin{align*}
\int_{\omega_i} \widetilde{\kappa} |\xi^{T}\Psi-{\xi^{r^{T}}}\Psi^{r}|^2
&=(\xi^{T}\Psi-(\xi^{r})^{T} \mathcal{R} \Psi)\widetilde{M}(\omega_{i})
(\xi^{T}\Psi-(\xi^{r})^{T} \mathcal{R} \Psi)^{T}&\\
&=\xi^{T}(I-F \mathcal{R})\Psi \widetilde{M}(\omega_{i})\Psi^{T}(I-F\mathcal{R})^{T}\xi&\\
&=\xi^{T}(X-F\mathcal{R}X)(X-F\mathcal{R}X)^{T}\xi=\norm{\xi^{T}(X-F\mathcal{R}X)}^{2}.&
\end{align*}
In the following, we construct a matrix $F$ that minimizes $\norm{\xi^{T}(X-F\mathcal{R}X)}$.   Following~\cite[Lemma 18]{Martinsson06}, we define
\[F=U^{-T}
\begin{pmatrix}
\mathcal{H}^{(-1)}\\ 0
\end{pmatrix},\]
where $\mathcal{H}$ and $\mathcal{S}$ are matrices of dimension $l\times k$ and $l\times (m-k)$ defined as,
\begin{align}\label{eq:Res}
&\mathcal{R} U^{-T}=
\begin{pmatrix}
\mathcal{H} & \mathcal{S}
\end{pmatrix},&\\
&\mathcal{H}^{(-1)}=(\mathcal{H}^{T}\mathcal{H})^{-1}\mathcal{H}^{T}.&
\end{align}
That is,  $\mathcal{H}$ is of rank $k$ and contains the first $k$ columns of $\mathcal{R} U^{-T}$ and $\mathcal{H}^{(-1)}$ is the pseudo-inverse  of $\mathcal{H}$.

We obtain
\begin{align*}
\xi^{T}(X-F\mathcal{R}X)=-\xi^{T} U^{-T}\left(
\begin{pmatrix}
\mathcal{H}^{(-1)}\\ 0
\end{pmatrix}
\begin{pmatrix}
\mathcal{H} & \mathcal{S}
\end{pmatrix}
-I\right)\Lambda^{1\over 2}.
\end{align*}
Furthermore,
\begin{align*}
\norm{\xi^{T}(X-F\mathcal{R}X)}\leq \norm{\xi^{T}U^{-T}}(\norm{\mathcal{H}^{(-1)}\mathcal{S}T}+\norm{T}),
\end{align*}
where $T$ is defined as
\begin{align}\label{eq:T}
\Lambda^{1\over 2}=
\begin{pmatrix}
S&0\\
0&T
\end{pmatrix}
.
\end{align}
Thus, the spectral norm of $T$ is bounded, that is, $\displaystyle\norm{T}\leq \frac{1}{\lambda_{k+1}}$.
Then, using standard properties of subordinated norms we have,
\begin{eqnarray}
\norm{\xi^{T}(X-F\mathcal{R}X)}&\leq&
\norm{\xi^{T}U^{-T}}(\norm{\mathcal{H}^{(-1)}\mathcal{S}T}+\norm{T})\\
&\leq&\norm{\xi^{T}U^{-T}}(\norm{\mathcal{H}^{(-1)}\mathcal{S}}+1)\norm{T}\\
&\leq& \frac{\norm{\mathcal{H}^{(-1)}\mathcal{S}}+1}{\lambda_{k+1}}\norm{\xi^{T}\Psi}_{A(\omega_i)}.
\end{eqnarray}
Here, to obtain the last step we have used the relation~\eqref{eqn:eigen_value} that implies
\begin{align}
\norm{\xi^{T}U^{-T}}&=({\xi^{T}U^{-T}\cdot (\xi^{T}U^{-T})^{T}})^{1\over 2}=({\xi^{T}U^{-T}U^{-1} \xi })^{1\over 2}\\
&=({\xi^{T}\Psi A(\omega_i)\Psi^{T} \xi })^{1\over 2}=\norm{\xi^{T}\Psi}_{A(\omega_i)}.
\end{align}
Hence,
\begin{align*}
\int_{\omega_i} \kappa |\nabla\chi|^2|\xi^{T}\Psi-{\xi^{r^{T}}}\Psi^{r}|^2
\leq (\frac{\norm{\mathcal{H}^{(-1)}\mathcal{S}}+1}{\lambda_{k+1}})^2\norm{\xi^{T}\Psi}_{A(\omega_i)}^2.
\end{align*}
The proof is complete.
\end{proof}
\begin{remark} {\bf Estimate for $\norm{\mathcal{H}^{(-1)}\mathcal{S}}$.}\label{rmk:res}
$U$ in the Lemma~\ref{lemma:1} is orthonormal with respect to the $A(\omega_i)-$inner product. If $U$ is an orthonormal matrix itself, then by~\cite[Lemma 18]{Martinsson06}, $\norm{\mathcal{H}^{(-1)}\mathcal{S}}\leq \sqrt{l}\beta\frac{1}{\lambda_{k+1}^2}$ for some positive number $\beta$ and given $k$. If $U$ is not orthonormal, then by applying the Gram-Schmidt process to the first $k$ columns of $U^{-T}$ (denoted as $V_1$) as well as the rest of the columns of it (denoted as $V_2$), we can obtain non-singular  triangular matrices $D_1$ and $D_2$, and $S_1$ and $S_2$ with $S_1^{T}S_1=I$ and $S_2^{T}S_2=I$, such that
\[U^{-T}=
\begin{pmatrix}
V_1&V_2
\end{pmatrix}\]
\[V_1=S_1D_1, \text{ and }V_2=S_2D_2.\]
Then
\[\mathcal{H}^{(-1)}\mathcal{S}=(\mathcal{H}^{T}\mathcal{H})^{-1}
\mathcal{H}^{T}\mathcal{S}=(V_1^{T}\mathcal{R}^{T}\mathcal{R}V_1)^{-1}V_1^{T}\mathcal{R}^{T}\mathcal{R}V_2,\]
and using the expressions for $V_1$ and $V_2$, we obtain
\[\mathcal{H}^{(-1)}\mathcal{S}=D_{1}^{-1}(S_1^{T}\mathcal{R}^{T}\mathcal{R}S_1)^{-1}(D_{1}^{-T}D_{1}^{T})
(S_1^{T}\mathcal{R}^{T}\mathcal{R}S_2)D_2.\]
Therefore, we have
\[\norm{\mathcal{H}^{(-1)}\mathcal{S}}\leq \norm{D_{1}^{-1}}\norm{(S_1^{T}\mathcal{R}^{T}\mathcal{R}S_1)^{-1}S_1^{T}R^{T}}\norm{\mathcal{R}S_2}\norm{D_2}.\]
Since the entries of $RS_1$ and $RS_2$ are i.i.d. Gaussian random variables of zero mean and unit variance, using~\cite[Lemma 14]{Martinsson06}, we get the estimate,
\[\norm{\mathcal{H}^{(-1)}\mathcal{S}}\leq \sqrt{2lm\beta^{2}\gamma^{2}+1}\norm{D_{1}^{-1}}\norm{D_2}\]
with probability not less than 
{\[1-{\frac{1}{\sqrt{2\pi(l-k+1)}}}{\left({\frac{e}{(l-k+1)\beta}}\right)^{l-k+1}} -{\frac{1}{4(\gamma^2-1)\sqrt{\pi m\gamma^2}}}{\left({\frac{2\gamma^2}{e^{\gamma^2-1}}}\right)^{m}},\]}
where $\beta$ and $\gamma$ are positive real numbers, $\gamma>1$.

Next, we note that
the $i$-th diagonal elements of $D_1$ and $D_2$ are the norms of $i$-th columns of $V_1$ and $V_2$.  Moreover, $\norm{D_{1}^{-1}}\norm{D_2}$ is the ratio of the largest diagonal element of $D_2$ and the smallest diagonal element of $D_1$.  Since $U^{-T} U^{-1}=\Psi A \Psi^T$ and $U^{-T} \Lambda U^{-1}=\Psi \widetilde{M} \Psi^T$, the estimate of $\norm{\mathcal{H}^{(-1)}\mathcal{S}}$ depends on the norms of the columns of $U^{-T}$ and, thus, depends on the contrast, in general.  In  the particular case, we assume that $\Psi A \Psi^T$ is a diagonal matrix with entries $\lambda_1\leq \lambda_2\leq ...\lambda_n$. In this case, $U^{-T}=U^{-1}=diag(\lambda_1^{-1/2},\lambda_2^{-1/2},...,\lambda_n^{-1/2})$ and $D_1=diag(\lambda_1^{-1/2},\lambda_2^{-1/2},...,\lambda_l^{-1/2})$, $D_2=diag(\lambda_{l+1}^{-1/2},\lambda_{l+2}^{-1/2},...,\lambda_n^{-1/2})$.  Then, it is easy to verify that $\norm{D_{1}^{-1}}\norm{D_2}=\lambda_{l}^{1/2}/\lambda_{l+1}^{1/2}$ in this case.  This estimate shows that the error can be sensitive on the choice of the eigenspace that is selected. In GMsFEM, we usually select the most important eigenvalues that are very small (see \cite{egw10}), thus, in general, a contrast-dependent situation can be avoided.  \end{remark}

In Lemma~\ref{lemma:1}, we have derived the approximation of the randomized snapshot space to the full snapshot space locally in each patch $\omega_i$. Next, we present the convergence the GMsFEM using randomized snapshots. The snapshots are obtained by multiplying the local snapshots $\Psi_{\omega_i}^{\text{snap}}$ with the corresponding partition of unity function $\chi_i$ (as in Eqn.~\eqref{eqn:global_offline}). To simplify notation we denote by $\Psi$ the full global snapshots (snapshots for all $\omega_i$'s) and by $\Psi^{r}$ the full randomized snapshots (snapshots for all $\omega_i$'s).

\begin{theorem}
Denote by $\Psi$ the snapshot matrix and by $\Psi^r$ the randomized snapshot matrix of dimension $m\times n$ and $l\times n$, respectively, and their ranks are $m$ and $l$, respectively. $\mathcal {R}$ is a matrix with i.i.d. Gaussian random entries and that $\Psi^r=\mathcal{R}\Psi$. Suppose $u_H$ is solved using the offline space formed using the snapshot matrix $\Psi^r$, and $u$ is the fine-scale solution of Eqn.\eqref{eq:original}, then we have
\begin{eqnarray}
\int_D \kappa  |\nabla(u-u_H)|^2\preceq \left( {1\over {\Lambda_{*}}}+ \left (\frac{1}{\Lambda_{*}}\right)^2(\norm{\mathcal{H}^{(-1)}\mathcal{S}}+1)^2\right)\int_D \kappa |\nabla u|^2 +  H^2 \int_D f^2,
\end{eqnarray}
where $\Lambda_*$ is defined in (\ref{eqn:lambda}) and $l<m<n$.
\end{theorem}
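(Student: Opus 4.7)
The plan is to reduce the theorem to Lemma~\ref{lemma:1} plus the standard oversampled GMsFEM convergence estimate of~\cite{eglp13oversampling}. By Galerkin orthogonality for the offline bilinear form, it suffices to exhibit some $v\in V_{\text{off}}^{r}$ (the global space spanned by $\chi_i$ times randomized-snapshot eigenvectors) such that $\int_{D}\kappa|\nabla(u-v)|^{2}$ obeys the claimed bound. I would build $v=\sum_{i}\chi_{i}v_{i}$, where $v_{i}$ is an appropriately chosen element of the local randomized snapshot space on $\omega_i^{+}$ restricted to $\omega_i$.

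First, I would invoke the oversampling analysis of~\cite{eglp13oversampling} in the case when \emph{all} snapshots are used. This produces a local comparison element $v_i^{\text{full}}$ (the spectral projection of $u|_{\omega_i^+}$ onto the leading full-snapshot modes) and yields, after assembling via the partition of unity and using $\widetilde{\kappa}=\sum_{i}\kappa|\nabla\chi_{i}|^{2}$, a global bound of the form $\tfrac{1}{\Lambda_{*}}\int_{D}\kappa|\nabla u|^{2}+H^{2}\int_{D}f^{2}$. The $H^2\|f\|^2$ term comes from the standard coarse-residual estimate arising because $u$ does not exactly solve the homogeneous equation on $\omega_i^+$; the $1/\Lambda_*$ term comes from truncating the tail of the local spectral expansion.

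Next, I would replace $v_i^{\text{full}}$ by its randomized surrogate $v_i^{r}$: writing $v_i^{\text{full}}=\xi^{T}\Psi$ in the notation of Lemma~\ref{lemma:1}, choose $v_i^{r}=(\xi^{r})^{T}\Psi^{r}$ with $\xi^{r}=F^{T}\xi$ as constructed there. The extra error $v_i^{\text{full}}-v_i^{r}$ appears in the $\widetilde{M}(\omega_i)$-norm when one expands $\int_{D}\kappa|\nabla((u-v)\chi_i)|^{2}$ using $\chi_i$ and collects the $|\nabla\chi_i|^{2}$ contribution. Lemma~\ref{lemma:1} then bounds this by $\bigl((\|\mathcal{H}^{(-1)}\mathcal{S}\|+1)/\lambda_{k+1}\bigr)^{2}\|\xi^{T}\Psi\|_{A(\omega_{i})}^{2}$. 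Identifying $\lambda_{k+1}$ with the global spectral gap $\Lambda_{*}$ and summing over $i$ (using finite overlap of the neighborhoods and $\|\xi^{T}\Psi\|_{A(\omega_i)}^{2}\preceq\int_{\omega_i^+}\kappa|\nabla u|^{2}$) produces exactly the $(1/\Lambda_{*})^{2}(\|\mathcal{H}^{(-1)}\mathcal{S}\|+1)^{2}\int_{D}\kappa|\nabla u|^{2}$ contribution.

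The main obstacle I anticipate is the bookkeeping of norms during assembly. Lemma~\ref{lemma:1} controls the local randomization error in a $\widetilde{\kappa}$-weighted $L^{2}$ norm on $\omega_i$, whereas the target estimate lives in the global energy norm on $D$. Two things must be verified carefully: (i) after multiplying by $\chi_{i}$ and applying a gradient, the $|\nabla\chi_{i}|^{2}\,|v_i^{\text{full}}-v_i^{r}|^{2}$ contribution assembles into a $\widetilde{\kappa}$-weighted $L^{2}$ term to which Lemma~\ref{lemma:1} directly applies, while the $\chi_{i}^{2}\,|\nabla(v_i^{\text{full}}-v_i^{r})|^{2}$ contribution must be absorbed either through the eigenvalue relation $A^{\text{off}}\Theta=\lambda S^{\text{off}}\Theta$ or by an inverse-type estimate within the snapshot subspace; and (ii) the right-hand side $\|\xi^{T}\Psi\|_{A(\omega_i)}^{2}$ of Lemma~\ref{lemma:1} must be dominated by the local energy of $u$, which requires the stability of the spectral projection in the $A$-inner product. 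Once these two points are handled, the three error contributions combine additively to give the stated bound.
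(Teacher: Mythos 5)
Your overall architecture matches the paper's proof: best approximation in the energy norm, partition-of-unity assembly $v=\sum_i\chi_i v_i$, the full-snapshot spectral estimate producing the $1/\Lambda_*$ and $H^2\int f^2$ terms, and Lemma~\ref{lemma:1} for the randomization error $I^{\omega_i}u-I^{\omega_i}_r u$. However, the step you flag as an obstacle and leave open is precisely the one the proof turns on, and neither of the two mechanisms you propose for it closes the gap. The term $\int_{\omega_i}\kappa\chi_i^2|\nabla(I^{\omega_i}_r u-I^{\omega_i}u)|^2$ cannot be absorbed by an inverse estimate inside the snapshot subspace (generically that costs a factor $h^{-2}$, or at best an uncontrolled subspace-dependent constant), nor by the offline eigenvalue relation $A^{\text{off}}\Theta=\lambda S^{\text{off}}\Theta$ (applied to a generic element of the snapshot span this yields only the \emph{largest} generalized eigenvalue as the constant, which is not controlled independently of the contrast or the fine mesh). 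What the paper uses instead is a Caccioppoli-type inequality (inequality (29) of \cite{egw10}): every snapshot is a $\kappa$-harmonic extension on the oversampled region $\omega_i^{+}\supset\omega_i$, hence so is the difference $I^{\omega_i}_r u-I^{\omega_i}u$, and integrating by parts against $\chi_i^2(I^{\omega_i}_r u-I^{\omega_i}u)$ moves the derivatives onto $\chi_i$, giving
\begin{equation*}
\int_{\omega_i}\kappa\chi_i^2\,|\nabla(I^{\omega_i}_r u-I^{\omega_i}u)|^2 \;\preceq\; \int_{\omega_i}\widetilde{\kappa}\,|I^{\omega_i}_r u-I^{\omega_i}u|^2,
\end{equation*}
with $\widetilde{\kappa}=\sum_i\kappa|\nabla\chi_i|^2$, which is exactly the quantity Lemma~\ref{lemma:1} controls; this is also why the oversampling is essential to the argument and not just a numerical convenience. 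The same inequality, with the extra forcing term since $u$ itself is not $\kappa$-harmonic, is what converts the full-snapshot truncation error into the $\widetilde{\kappa}$-weighted $L^2$ form to which the spectral estimate $\int_{\omega_i}\widetilde{\kappa}|u-I^{\omega_i}u|^2\preceq(\lambda_{k+1}^{\omega_i})^{-1}\int_{\omega_i}\kappa|\nabla(u-I^{\omega_i}u)|^2$ applies. Your point (ii), the bound $\norm{I^{\omega_i}u}_{A(\omega_i)}^2\preceq\int_{\omega_i}\kappa|\nabla u|^2$, is handled as you anticipate, via the energy-norm stability of the spectral interpolant from \cite{ge09_1}.
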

\begin{proof}
Denote $I^{\omega_i}$ and $I^{\omega_i}_{r}$ as arbitrary interpolants from the fine-scale to the space spanned by the rows of $\Psi$ and $\Psi^r$ on the coarse neighborhood $\omega_i$, respectively. Later, we choose a proper interpolant that reduces the error.  Taking into account that the GMsFEM solution, $u_H$, provides a minimal energy error, we have
  \begin{equation}
\label{eq:thmmain}
\begin{split}
    \int_D \kappa  |\nabla(u-u_H)|^2
    &\preceq \int_D \kappa  |\nabla(\sum_i \chi_i(u-I^{\omega_i}_{r}u))|^2\\
    &\preceq
    \sum_i\int_{\omega_i} \kappa  |\nabla(\chi_i (u-I^{\omega_i}u))|^2+\int_{\omega_i} \kappa  |\nabla(\chi_i (I^{\omega_i}_{r}u- I^{\omega_i}u))|^2.
  \end{split}
\end{equation}
Next, we use the inequalities
\begin{align}
\label{eq:Cacc1}
\int_{\omega_i} \kappa \chi^2_i |\nabla(u- I^{\omega_i}u)|^2&\preceq \int_{\omega_i} \widetilde{\kappa}  |(u-I^{\omega_i}u)|^2 + \left|\int_{\omega_i}f\chi_i^2  (u-I^{\omega_i}u)\right|,\\
\label{eq:Cacc2}
\int_{\omega_i} \kappa \chi^2_i  |\nabla(I^{\omega_i}_ru-I^{\omega_i}u)|^2&\preceq \int_{\omega_i} \widetilde{\kappa}  |(I^{\omega_i}_{r}u-I^{\omega_i}u)|^2,
\end{align}
where $\widetilde{\kappa}$ is defined by~\eqref{def:tildekappa}. Here, we have used
the inequality (29) in \cite{egw10}. Using~\eqref{eq:Cacc1} and~\eqref{eq:Cacc2}, and we obtain from (\ref{eq:thmmain})
\begin{align}
 \int_D \kappa  |\nabla(u-u_H)|^2\preceq \sum_i \int_{\omega_i} \widetilde{\kappa}  |(u-I^{\omega_i}u)|^2 &+ \sum_i  \left|\int_{\omega_i}f\chi_i^2  (u-I^{\omega_i}u)\right| \nonumber \\
&+\sum_i \int_{\omega_i} \widetilde{\kappa}  |(I^{\omega_i}_{r}u-I^{\omega_i}u)|^2.
\end{align}
Selecting a proper interpolant $I^{\omega_i}$, we have
\begin{equation}
\label{eq:estim11}
\int_{\omega_i} \widetilde{\kappa}  |(u-I^{\omega_i}u)|^2 \preceq {1\over \lambda_{k+1}^{\omega_i}}\int_{\omega_i} \kappa |\nabla (u-I^{\omega_i}u)|^2,
\end{equation}
where $\lambda_{k+1}^{\omega_i}$ is the eigenvalue that the corresponding eigenvector  which is not included in the coarse space. Similarly, we can show that
\begin{align}
\left|\int_{\omega_i}f\chi_i^2  (u-I^{\omega_i}u)\right|&\preceq \int_{\omega_i}\widetilde{\kappa}^{-1} f^2 + \int_{\omega_i}\widetilde{\kappa} |(u-I^{\omega_i}u)|^2\nonumber\\ &\preceq
\int_{\omega_i}\widetilde{\kappa}^{-1} f^2 +  {1\over \lambda_{k+1}^{\omega_i}}\int_{\omega_i} \kappa |\nabla (u-I^{\omega_i}u)|^2.
\end{align}
We note that $\int_{\omega_i}\widetilde{\kappa}^{-1} f^2 \preceq H^2 \int_{\omega_i}f^2$
if $|\nabla \chi_i|=\mathcal{O}( H^{-1})$.
Combining the above estimates, we have
\begin{align}
  \int_D \kappa |\nabla(u-u_H)|^2 &\preceq
 \sum_i{1\over{\lambda_{k+1}^{\omega_i}}}   \int_{\omega_i} \kappa |\nabla (u-I^{\omega_i}u)|^2 \nonumber\\
&+\sum_i\int_{\omega_{i}}\widetilde{\kappa}^{-1}f^2+\sum_i\int_{\omega_i} \widetilde{\kappa}  |(I^{\omega_i}_{r}u-I^{\omega_i}u)|^2.
\label{eq:estimate1}
\end{align}
For a fixed vector $I^{\omega_i}u\in\Psi$, by Lemma~\ref{lemma:1}, we can get a corresponding vector $\xi^{r}\in\Psi^r$, such that
\begin{align}\label{eq:estimate2}
\int_{\omega_i} \widetilde{\kappa} |\xi^{r}-I^{\omega_i}u|^2\preceq \left(\frac{\norm{\mathcal{H}^{(-1)}(\omega_i)\mathcal{S}
(\omega_i)}+1}{\lambda_{k+1}^{\omega_i}}\right)^2\norm{I^{\omega_i}u}_{A(\omega_i)}^2,
\end{align}
for some integer $k$. For simplicity, we assume that $\lambda_{k+1}^{\omega_i}$ is the same eigenvector as in the interpolant defined in (\ref{eq:estim11}) by selecting the smallest index.

We define $I^{\omega_i}_{r}u=\xi^{r}$.
Thus using Eqns.~\eqref{eq:estimate1} and~\eqref{eq:estimate2}, we obtain
\begin{align}
  \int_D \kappa |\nabla(u-u_H)|^2 &\preceq \max\limits_{\omega_i}\left( {1\over{\lambda_{k+1}^{\omega_i}}} \right)
  \int_{\omega_i} \kappa |\nabla u|^2 + \sum_i\int_{\omega_{i}}\widetilde{\kappa}^{-1}f^2\nonumber\\ &+\sum_i\left(\frac{\norm{\mathcal{H}^{(-1)}\mathcal{S}}+1}{\lambda_{k+1}^{\omega_i}}\right)^2\norm{I^{\omega_i}u}_{A(\omega_i)}^2
  \\ &\preceq
  \left({1\over \Lambda_*} + {1\over \Lambda_*^2}(\norm{\mathcal{H}^{(-1)}\mathcal{S}}+1)^2 \right) \int \kappa |\nabla u|^2 + \sum_i\int_{\omega_{i}}\widetilde{\kappa}^{-1}f^2,
\end{align}
where
 \begin{align}\label{eqn:lambda}
  \Lambda_{*}=\min_{\omega_i}{{\lambda_{k+1}^{\omega_i}}}.
  \end{align}
Here, we have used the boundedness property of the interpolant in the energy norm \cite{ge09_1}.
Assuming $|\nabla \chi_i|=\mathcal{O}( H^{-1})$, we get
\begin{equation}
\begin{split}
\int_D \kappa  |\nabla(u-u_H)|^2 \preceq
\left({1\over \Lambda_*} + {1\over \Lambda_*^2}(\norm{\mathcal{H}^{(-1)}\mathcal{S}}+1)^2  \right) \int \kappa |\nabla u|^2 +
 H^2 \int_{D}f^2.
 \end{split}
\end{equation}

 \end{proof}

\begin{remark}
One can improve the error due to GMsFEM discretization by changing the eigenvalue
problem (see \cite{eglp13oversampling}) and the error will scale as ${1\over \Lambda_*^q}$, for a large $q$ that depends on the size of the oversampled region.
In this case, the error due to GMsFEM discretization
will scale as $(1/\Lambda_*)^n$ for some large $n$.
 \end{remark}

\section{Conclusions}
\label{sec:conclusions}
In this paper, we study the use of randomized boundary conditions to reduce the computational cost in multiscale finite element methods. Local multiscale finite element basis functions are constructed in each coarse patch by computing snapshot vectors and performing local spectral decompositions.  The choice of snapshot vectors and the local spectral decomposition is important for achieving a low dimensional coarse spaces that can approximate the solution accurately on a coarse mesh.  For example, the use of harmonic functions computed in oversampled regions improves the accuracy. However, the computation of harmonic functions for all possible boundary conditions in  each local region is expensive. Therefore, we propose the use of randomized boundary conditions for computing the snapshot vectors.  We show that with a few snapshot vectors, we can compute the basis functions that provide an accuracy that is similar to that obtained using all snapshot vectors. We analyze  the method and validate our estimates with numerical evidence.  Moreover, we discuss approaches that are more accurate compared to randomized snapshot; however, they are more expensive. Finally, we discuss how adaptive computations can be performed efficiently and robustly within the framework of randomized snapshots where multiscale basis functions are added locally in some regions based on an error indicator.

\section*{References}
\bibliographystyle{plain}
\bibliography{references}
\end{document}